\numberwithin{equation}{section}
\newtheorem{definition}{Definition}[section]
\newtheorem{theorem}{Theorem}[section]
\newtheorem{corollary}[theorem]{Corollary}
\newtheorem{lemma}[theorem]{Lemma}
\newtheorem{proposition}[theorem]{Proposition}
\newtheorem{remark}[theorem]{Remark}
\def \bb{\mathbb}
\def \mc{\mathcal}
\def \mf{\mathfrak}
\def \RR{{\bb{R}}}
\def \({\left(}
\def \){\right)}
\def \<{\langle}
\def \>{\rangle}
\def \dsum{\oplus}
\def \union{\cup}
\def \vargeq{\geqslant}
\def \x{{\boldsymbol x}}
\def \w{{\boldsymbol w}}
\def \y{{\boldsymbol y}}
\def \q{{\boldsymbol q}}
\def \v{{\boldsymbol v}}
\def \e{{\boldsymbol e}}
\def \boldpi{{\boldsymbol \pi}}
\def \boldmu{{\boldsymbol \mu}}
\def \boldalpha{{\boldsymbol \alpha}}
\def \boldbeta{{\boldsymbol \beta}}
\def \B{{\boldsymbol B}}
\def \A{{\boldsymbol A}}
\def \E{{\boldsymbol E}}
\def \e{{\boldsymbol e}}
\def \Q{{\boldsymbol Q}}
\def \V{{\boldsymbol V}}
\def \R{{\mathbb R}}
\def \beq{\begin{equation}}
\def \eeq{\end{equation}}
\begin{document}
\title{Regularization of the Kepler problem on the Sphere}

\author{Shengda Hu}
\author{Manuele Santoprete }
\address{Department of Mathematics
Wilfrid Laurier University
75 University Avenue West,
Waterloo Ontario, Canada, N2L 3C5 }
 \email{shu@wlu.ca}
 \email{msantopr@wlu.ca}

\begin{abstract}
In this paper we regularize the  Kepler problem on $S^3$ in several
different ways. First, we perform a Moser-type regularization. Then, we
adapt the Ligon-Schaaf regularization to our problem. Finally, we show
that the Moser regularization and the Ligon-Schaaf map we obtained can be
understood as the composition of the corresponding maps for the Kepler problem
in Euclidean space and the  gnomonic transformation.
\end{abstract}
\maketitle

\section{Introduction}
It often happens that the flow associated to a vector field is incomplete. 
Famous examples are the Newtonian $n$-body problem,  where  singularities arise because  of the  existence of collision orbits, and the Kepler problem. 
 Regularization of vector fields is a common procedure in the study of differential equations. There are two main approaches. In the first approach  the  incompleteness of the flow is  removed by embedding it into a complete flow. The qualitative behavior of solutions off the set of singularities is the same for both vector fields. 
The second approach  involves surgery  and it is usually called regularization by surgery or block-regularization. Roughly, the idea is to excise a neighborhood of the singularity from the manifold on which the vector field is defined and then to identify appropriate points on the boundary of the region. 

Both approaches have been applied to the Kepler problem in Euclidean space.  The second approach was first used for the Kepler problem by Easton \cite{Easton}, while the first approach has several variants. We mention only the most relevant for our work. 
As far as we know the regularization of the planar Kepler  problem using the first approach was first discussed by Levi-Civita \cite{Levi-Civita}. Another beautiful  incarnation of the second approach, due  to Moser \cite{Moser}, consist in showing that the flow of the $n$-dimensional Kepler problem on surface of constant negative energy is conjugate to the geodesic flow on the unit tangent bundle of $S^n$. 
The main disadvantage of Levi-Civita and Moser's regularization methods is
that they  handle separately each energy level. This disadvantage is partially
removed by a regularization procedure due to Ligon and Schaaf \cite{Ligon}. 
The Ligon-Schaaf regularization  procedure allows to handle together all negative 
(resp., all positive) energy levels. However, negative, positive and zero energy levels still cannot be
 handled together with this procedure.
The treatment of the regularization in the original
 article by Ligon and Schaaf  requires laborious computations, and a somewhat simplified treatment of the Ligon-Schaaf
 regularization map is due to  Cushman and Duisteermaat \cite{CD} and Cushman and Bates   \cite{Cushman}.
Recently  Marle \cite{Marle}, and  Heckman and de Laat \cite{Heckman} posted on arXiv preprints that
give another simplified treatment by showing that the Ligon-Schaaf map can be  understood as an adaptation of the Moser
regularization map.
Regularization by surgery handles all the energy level at once, however this is a completely different approach that has other 
shortcomings. In fact, on the one hand it is difficult to find the attaching map and on the other hand this kind of regularization 
is not too helpful in understanding  the global flow of the system and the near-collision orbits.

The Kepler problem on the three-sphere $S^3$ is the main topic of this paper.
The Kepler problem  and $n$-body problem on spaces of constant curvature are over a century old problems and  recently
 have generated a good deal of scholarly interest. See \cite{Santoprete2012_1,Santoprete2012_2, Diacu_2012, Santoprete2008,
Santoprete2009} 
for some recent results and some history. See \cite{Diacu_2012} for a more exhaustive survey of recent results. 

The flow of the Kepler problem on the three-sphere $S^3$ is incomplete and 
several procedures have been used to regularize its vector field. The surgery approach to regularization was applied in \cite{Santoprete2009} to the Kepler problem on a  class of surfaces of revolution that include the two-sphere.
The Levi-Civita and Moser's approaches were used in \cite{Santander}. 

In the present article, we start by deriving the equations of motion
using the method of Dirac brackets (section 2). In section 3, we
describe the integrals of motion of the system and derive the equations
of the orbit. In section 4, we describe  the Moser regularization
applied to the Kepler problem on $S^3$. The details are given for the
negative energy level sets, where the orbits lie completely in the upper
hemisphere. The same approach applies to the part of any energy level
set that lies in the upper hemisphere. Furthermore, we carry out the
Ligon-Schaaf regularization directly (section 5) and compare it with the
case of Euclidean Kepler problem. It turns out that the Ligon-Schaaf
regularization for the negative energy part the two systems are
naturally related by the gnomonic map (section 6). The gnomonic map
takes the upper hemisphere to the full Euclidean space, and the two
Kepler systems are related by a rescaling of the induced map on the
tangent bundle. More precisely,  let $\Phi$ and $\Phi_c$ be the
Ligon-Schaaf maps for the Kepler problem on $S^3$ and $\RR^3$,
respectively, and let $\Psi$ denote the map induced by the gnomonic
projection, then $\Phi=\Phi_c\circ\Psi$. The  map $\Psi$ is not
symplectic, which implies the non-symplecticness of the Ligon-Shaaf map
for the spherical case. In the same section, we also show that the
gnomonic transformation relates the Moser regularization of the two
systems.

The relations uncovered in this paper are summarized in the diagram
below.

\[
\xymatrix{ \mbox{Kepler on } S_+^3 \ar[dd]_{\Psi, ~{\footnotesize \mbox{Section \ref{sec:gnomonic_transformation}}}} 
\ar[rrdd]^-{{\Phi,~ \footnotesize \mbox{Section \ref{sec:LS-regularization}}}}
 \ar[rrrrrd]^-{{\footnotesize \mbox{Section \ref{sec:Moser}}}} &
& 
\\ 
& & & & &\mbox{ geodesic flow on } S^3
\\                                                                    
\mbox{ Kepler on } {\mathbb R}^3 \ar[rr]_-{\Phi_c,~ {\footnotesize \mbox{\cite{CD,Cushman}}}}
 \ar[rrrrru]^-{{\footnotesize \mbox{\cite{Milnor,Moser}}}}&
& \mbox{Delaunay}  \ar[rrru]
}
\]


\section{Preliminaries}
Let $\langle\cdot,\cdot\rangle$ be the Euclidean inner product  in $\R^4$, and $\|\cdot\|$ the Euclidean norm in $\R^4$, and let $\cdot$, $\times$, and $|\cdot|$, be the usual dot product, vector product and Euclidean norm in $\R^3$, respectively .
Let $q=(q_0,\q)$ with $\q=(q_1,q_2,q_3)$ and $v=(v_0,\v)$ with $\v=(v_1,v_2,v_3)$ be canonical coordinates in $T\R^4$,  with the symplectic $2$-form $\displaystyle{\omega= \sum_idq_i\wedge dv_i}$.

Consider the following Hamiltonian system  $(H, T\R^4,\omega)$ with Hamiltonian 
\beq
H(q,v)=\frac 1 2\langle v,v \rangle +V(q) 
\eeq
where the potential energy $V(q)$ is
\beq
V(q)=-\gamma\frac{q_0}{(1-q_0^2)^{1/2}}
\eeq
This Hamiltonian describes  a particle that moves  in $\R^4$ under the influence  of potential $V(q)$.

When the particle is constrained to move on the unit $3$-sphere $S^3\subset \R^4$, the system is restricted to the tangent bundle of the 3-sphere:
\[
TS^3=\{(q,v)\in T\R^4|\langle q,q\rangle-1=0 \mbox{ and } \langle q, v\rangle =0\}
\]

The Hamiltonian vector field $X_H$ of $H$ on $T\RR^4$ does not restrict to the Hamiltonian vector field $X_{H|_{TS^3}}$ of the constrained system. There are two approaches in computing $X_{H|_{TS^3}}$ using either a modified Hamiltonian function $H^*$ (resulting in $X_{H^*}$, see Cushman \cite{Cushman}) or a modified Poisson bracket, the Dirac-Poisson bracket $\{\cdot, \cdot\}^*$ (resulting in $X_H^*$) on $T\RR^4$, such that the restriction of the resulting vector fields coincide with $X_{H|_{TS^3}}$.
The two approaches are related since
\[
\{F,G\}^*|_{TS^3}=\{F^*,G^*\}|_{TS^3}
\]
 %
The Dirac-Poisson bracket can be explicitly written down in this case.
\begin{lemma}\label{lemma:diracpoisson}
The Dirac-Poisson structure $\{\cdot,\cdot\}^*|_{TS^3}$ is 
\begin{center}
\begin{tabular}{c|cccccccc}
&$q_0$&$q_1$&$q_2$&$q_3$&$v_0$&$v_1$&$v_2$&$v_3$\\
\hline
$q_0$&0  &0 &0 &0 &$1-q_0^2$ &$-q_0q_1$ &$-q_0q_2$ &$-q_0q_3$\\
$q_1$&   &0 &0 &0 &$-q_0q_1$ &$1-q_1^2$ &$-q_1q_2$ &$-q_1q_3$\\
$q_2$&   &  &0 &0 &$-q_0q_2$& $-q_1q_2$&$1-q_2^2$ &$-q_2q_3$ \\
$q_3$&   &  &  &0 & $-q_0q_3$&$-q_1q_3$ &$-q_2q_3$ &$1-q_3^2$ \\
$v_0$&   &  &  &  &0 & $q_1v_0-q_0v_1$ &$q_2v_0-q_0v_2 $ &$q_3v_0-q_0v_3$\\
$v_1$&   &  &  &  & &0 &$ q_2v_1-q_1v_2$ &$q_3v_1-q_1v_3$\\
$v_2$&   &  &  &  & & &0 &$q_3v_2-q_2v_3$\\
$v_3$&   &  &  &  & & & &0\\
\end{tabular}
\end{center}
or 
\[
\{q_{\alpha},v_{\beta}\}^*|_{TS^3}=\delta_{\alpha\beta}-q_{\alpha}q_{\beta},\quad \{v_{\alpha},v_{\beta}\}^*|_{TS^3}=L_{\alpha\beta}, \quad \{q_{\alpha},q_{\beta}\}^*|_{TS^3}=0
\]
where $\alpha,\beta=0,1,2,3$ and $L_{\alpha\beta}=q_{\beta}v_{\alpha}-q_{\alpha}v_{\beta}$.
\end{lemma}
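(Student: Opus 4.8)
The plan is to treat the two functions cutting out $TS^3$, namely
\[
\phi_1 = \langle q,q\rangle - 1, \qquad \phi_2 = \langle q,v\rangle,
\]
as a pair of second-class constraints and to apply the standard Dirac bracket formula
\[
\{F,G\}^* = \{F,G\} - \sum_{a,b}\{F,\phi_a\}(C^{-1})^{ab}\{\phi_b,G\},
\]
where $\{\cdot,\cdot\}$ is the canonical bracket on $T\RR^4$ associated with $\omega$, so that $\{q_\alpha,v_\beta\}=\delta_{\alpha\beta}$ and $\{q_\alpha,q_\beta\}=\{v_\alpha,v_\beta\}=0$, and $C_{ab}=\{\phi_a,\phi_b\}$ is the constraint matrix. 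Once this formula is in hand the whole statement reduces to a handful of elementary bracket computations followed by restriction to the constraint surface.

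First I would compute $C$. Because $\phi_1$ depends on $q$ alone, antisymmetry gives $\{\phi_1,\phi_1\}=\{\phi_2,\phi_2\}=0$, while a direct application of the product rule yields $\{\phi_1,\phi_2\}=\{\langle q,q\rangle,\langle q,v\rangle\}=2\langle q,q\rangle$, which equals $2$ on $TS^3$. Hence, after restriction,
\[
C=\begin{pmatrix}0 & 2 \\ -2 & 0\end{pmatrix}, \qquad C^{-1}=\begin{pmatrix}0 & -\half \\ \half & 0\end{pmatrix}.
\]
The invertibility of $C$ is exactly the assertion that the constraints are second-class, so the Dirac bracket is well defined. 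Substituting $C^{-1}$ into the general formula collapses it to
\[
\{F,G\}^* = \{F,G\} + \half\{F,\phi_1\}\{\phi_2,G\} - \half\{F,\phi_2\}\{\phi_1,G\}.
\]

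Next I would record the four brackets of the coordinate functions against the constraints, all immediate from the canonical relations: $\{q_\alpha,\phi_1\}=0$ and $\{q_\alpha,\phi_2\}=q_\alpha$, while $\{v_\alpha,\phi_1\}=-2q_\alpha$ and $\{v_\alpha,\phi_2\}=-v_\alpha$. Feeding each pair of coordinate functions through the displayed formula and restricting to $TS^3$ then reproduces the three families of entries. The $\{q_\alpha,q_\beta\}^*$ block vanishes because $\{q_\alpha,\phi_1\}=0$ annihilates both correction terms; the mixed bracket reduces to $\{q_\alpha,v_\beta\}^*=\delta_{\alpha\beta}-q_\alpha q_\beta$, the single surviving correction term contributing $-q_\alpha q_\beta$; and the $\{v_\alpha,v_\beta\}^*$ block assembles from its two correction terms into $q_\beta v_\alpha - q_\alpha v_\beta = L_{\alpha\beta}$. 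This matches the table and the closed formulas in the statement.

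I do not expect a genuine obstacle here, since the argument is a routine instance of Dirac reduction; the only delicate points are bookkeeping ones. One must fix a consistent sign convention for $C$ and $C^{-1}$, keep the order of arguments straight in the antisymmetric correction terms, and impose $\langle q,q\rangle=1$ only at the very end, so that the bracket is first built on a neighborhood of $TS^3$ on which $C$ remains invertible and is only afterwards restricted. As a consistency check one may verify that $\{\phi_a,\cdot\}^*\equiv 0$ on the constraint surface, which confirms that the resulting bracket is tangent to $TS^3$ and is automatic from the construction.
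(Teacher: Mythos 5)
Your proposal is correct and follows essentially the same route as the paper: both treat $c_1=\langle q,q\rangle-1$ and $c_2=\langle q,v\rangle$ as second-class constraints, invert the constraint matrix (the paper's $C=\tfrac{1}{2\langle q,q\rangle}\begin{bmatrix}0&-1\\1&0\end{bmatrix}$ agrees with your $C^{-1}$ on $TS^3$), and apply the standard Dirac bracket formula. The paper simply ends with ``the result follows from a computation,'' whereas you carry out that computation explicitly and correctly, including the justification that restricting the coefficient $\tfrac{1}{2\langle q,q\rangle}$ to the surface early does not affect the restricted bracket.
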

\begin{proof}
 The phase space $TS^3$ is given as a subset of $T\bb R^4$ by the constraints
\[
c_1(q,v)=\langle q, q\rangle-1=0\mbox{  and  } c_2(q,v)=\langle q,v\rangle =0
\]

The Dirac-Poisson brackets are given by the relation
\[
\{F,G\}^*=\{F,G\}+\sum_{i,j} C_{ij}\{F,c_i\}\{G,c_j\}
\]
where  $C_{ij}$ are the elements of the inverse of 
the matrix with entries $\{c_i,c_j\}$. In this case 
\[
C=\frac{1}{2\langle q,q\rangle }
\begin{bmatrix}
0&-1\\
1&0\\
\end{bmatrix}
\]
The result follows from a computation. 
\end{proof}

\begin{lemma}\label{lemma:eqmotion}
 The constrained equations on $TS^3$ are
\beq
\begin{split}
 \dot q&=v\\
\dot v&=-\nabla V(q)-(\langle v,v\rangle-\langle q,\nabla V(q)\rangle)~q\\
\end{split}
\eeq
\end{lemma}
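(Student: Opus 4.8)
The plan is to read off the equations of motion directly from the Dirac-Poisson structure computed in Lemma \ref{lemma:diracpoisson}, using the fact that on the constrained phase space the evolution of any observable $F$ is governed by $\dot F = \{F,H\}^*|_{TS^3}$. Applying this to the coordinate functions $q_\alpha$ and $v_\alpha$ for $\alpha=0,1,2,3$ and assembling the results into vectors will give the two desired equations. Throughout I would use the derivation (Leibniz/chain-rule) property of the Poisson bracket to reduce brackets with $H=\frac12\langle v,v\rangle+V(q)$ to the elementary brackets tabulated in the previous lemma, namely $\{q_\alpha,v_\beta\}^*=\delta_{\alpha\beta}-q_\alpha q_\beta$, $\{v_\alpha,v_\beta\}^*=L_{\alpha\beta}=q_\beta v_\alpha-q_\alpha v_\beta$, and $\{q_\alpha,q_\beta\}^*=0$.

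First I would compute $\dot q_\alpha$. Since $\{q_\alpha,q_\beta\}^*=0$, the potential $V(q)$ contributes nothing, and the chain rule gives $\dot q_\alpha=\tfrac12\sum_\beta (\partial_{v_\beta}\langle v,v\rangle)\{q_\alpha,v_\beta\}^*=\sum_\beta v_\beta(\delta_{\alpha\beta}-q_\alpha q_\beta)=v_\alpha-q_\alpha\langle q,v\rangle$. At this point I would invoke the constraint $\langle q,v\rangle=0$ that defines $TS^3$ to conclude $\dot q=v$. Next I would compute $\dot v_\alpha=\{v_\alpha,\tfrac12\langle v,v\rangle\}^*+\{v_\alpha,V(q)\}^*$. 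The kinetic term yields $\sum_\beta v_\beta L_{\alpha\beta}=v_\alpha\langle q,v\rangle-q_\alpha\langle v,v\rangle$, which reduces to $-q_\alpha\langle v,v\rangle$ on the constraint surface, while the potential term yields $-\sum_\beta(\partial_{q_\beta}V)(\delta_{\alpha\beta}-q_\alpha q_\beta)=-(\nabla V)_\alpha+q_\alpha\langle q,\nabla V\rangle$. Collecting the two contributions gives $\dot v_\alpha=-(\nabla V)_\alpha-q_\alpha(\langle v,v\rangle-\langle q,\nabla V\rangle)$, which is precisely the stated vector equation once written componentwise.

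The computation is essentially mechanical, so I do not expect a genuine obstacle; the one point requiring care is the order of operations. The Dirac bracket is only the \emph{restricted} bracket $\{\cdot,\cdot\}^*|_{TS^3}$, so the simplifications $\langle q,v\rangle=0$ and $\langle q,q\rangle=1$ should be applied only after the bracket has been evaluated, not before, and one must resist the temptation to drop the $q_\alpha\langle q,v\rangle$ and $v_\alpha\langle q,v\rangle$ terms prematurely. It is reassuring that both the $\dot q$ and $\dot v$ equations remain tangent to $TS^3$: differentiating $\langle q,q\rangle=1$ gives $\langle q,\dot q\rangle=\langle q,v\rangle=0$, and differentiating $\langle q,v\rangle=0$ against the formula for $\dot v$ is consistent with the $q$-component subtracted off, confirming that the vector field genuinely lives on the tangent bundle of the sphere.
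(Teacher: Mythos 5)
Your proposal is correct and follows essentially the same route as the paper: the paper's proof is exactly to set $\dot q=\{q,H\}^*$, $\dot v=\{v,H\}^*$ using the Dirac--Poisson bracket of Lemma \ref{lemma:diracpoisson} and then restrict the resulting vector field to $TS^3$. You have simply carried out explicitly the Leibniz-rule computation and the on-shell simplifications ($\langle q,v\rangle=0$) that the paper leaves to the reader, and your bookkeeping of when the constraints may be imposed is sound.
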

\begin{proof}

We first compute the equations of motion for the Hamiltonian function $H$ with respect to the Dirac-Poisson bracket 
\[
\begin{split}
 \dot q&=\{q, H\}^*\\
\dot v&=\{v,H\}^*\\
\end{split}
\]
then we restrict the resulting vector field  $X^*_H$ to $TS^3$.

\end{proof}
Let $\{e_0,e_1,e_2,e_3,e_4\}$ be the standard basis for $\R^4$. Then for our choice of the potential, the equations  of motion take the form: 
\beq\label{eqmotion}
\begin{split}
 \dot q&=v\\
\dot v&=\frac{\gamma e_0}{(1-q_0^2)^{3/2}}-(\langle v,v\rangle+\gamma\frac{q_0}{(1-q_0^2)^{3/2}})~q\\
\end{split}
\eeq
\section{Conserved quantities and the equations of the trajectory}
It is easy to see that the Hamiltonian $H$ and the angular momentum $\boldmu = \q \times \v$ are integrals of motion of the vector field $X_{H^*}|_{TS^3}=X_{H|_{TS^3}}$.
%
There is another interesting integral
\[
\A=\boldpi \times \boldmu -\gamma \frac{\q}{|\q|}
\]
where $\boldpi=q_0\v-v_0\q$. The vector $\A$ is a spherical generalization of the vector known as 
the {\itshape Runge-Lenz vector} in  Euclidean space. Priority,  for the Runge-Lenz vector in Euclidean space,  is sometimes attributed to Laplace, 
but  appears to be  due to Jakob Hermann and Johann Bernoulli \cite{Goldstein1976}. 
The only difference in form between this integral and the one for the Kepler problem in Euclidean space is that in the latter problem the 
velocity $\v$ appears in $\A$ instead of $\boldpi$. 
 Note that, as in the case of the Kepler problem in Euclidean space, there is no universally accepted definition of the Runge-Lenz vector. The most common definition is given above, while the common alternative $\displaystyle{\e = \frac{\A}{\gamma}}$ is also called the {\itshape eccentricity vector}.
\begin{proposition}
The Runge-Lenz vector 
is an integral of motion.
\end{proposition}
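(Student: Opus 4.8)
The plan is to verify directly that $\A$ is constant along the flow, by differentiating it and invoking the equations of motion \eqref{eqmotion} together with the defining constraints of $TS^3$. Since $\A = \boldpi\times\boldmu - \gamma\,\q/|\q|$ with $\boldpi = q_0\v - v_0\q$ and $\boldmu = \q\times\v$, and since $\boldmu$ is already known to be conserved, the task reduces to showing that $\dot\boldpi\times\boldmu$ exactly cancels $\gamma\,\tfrac{d}{dt}(\q/|\q|)$.

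First I would compute $\dot\boldpi$. Writing $\Lambda = \langle v,v\rangle + \gamma q_0/(1-q_0^2)^{3/2}$ for brevity, the spatial and $e_0$ components of \eqref{eqmotion} read $\dot{\v} = -\Lambda\q$ and $\dot v_0 = \gamma/(1-q_0^2)^{3/2} - \Lambda q_0$. Substituting these together with $\dot q_0 = v_0$ and $\dot\q = \v$ into $\dot\boldpi = \dot q_0\,\v + q_0\,\dot\v - \dot v_0\,\q - v_0\,\dot\q$, the two $v_0\v$ terms cancel and the two $\Lambda q_0\q$ contributions cancel, leaving the clean expression $\dot\boldpi = -\tfrac{\gamma}{(1-q_0^2)^{3/2}}\,\q$. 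I would also record for completeness that $\dot\boldmu = \v\times\v + \q\times(-\Lambda\q) = 0$, which re-derives conservation of angular momentum.

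Next, because $\dot\boldmu = 0$, I have $\dot\A = \dot\boldpi\times\boldmu - \gamma\,\tfrac{d}{dt}(\q/|\q|)$. Expanding the first term with the triple-product identity $\q\times(\q\times\v) = (\q\cdot\v)\,\q - |\q|^2\,\v$ yields $\dot\boldpi\times\boldmu = -\tfrac{\gamma}{(1-q_0^2)^{3/2}}\big[(\q\cdot\v)\,\q - |\q|^2\,\v\big]$, while a direct differentiation gives $\tfrac{d}{dt}(\q/|\q|) = \v/|\q| - (\q\cdot\v)\,\q/|\q|^3$.

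The key step, and the only place where the geometry of the sphere enters, is to use the constraint $\langle q,q\rangle = 1$, that is $|\q|^2 = 1 - q_0^2$, so that $(1-q_0^2)^{3/2} = |\q|^3$. After this substitution the two groups of terms become term-by-term negatives of one another and $\dot\A = 0$. I expect this constraint substitution to be the main (indeed the only) subtlety: away from $TS^3$ the factors $(1-q_0^2)^{3/2}$ and $|\q|^3$ are unrelated and the cancellation fails, so $\A$ is conserved precisely because we work on the sphere. Everything else is the routine vector algebra indicated above.
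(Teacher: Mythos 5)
Your proof is correct and follows essentially the same route as the paper's: differentiate $\A$ along the flow, substitute the equations of motion \eqref{eqmotion}, and use the constraint $|\q|^2 = 1-q_0^2$ together with the identity $\q\times(\q\times\v) = (\q\cdot\v)\,\q - |\q|^2\,\v$ to make the two groups of terms cancel. The only difference is organizational: you first isolate the clean intermediate formula $\dot\boldpi = -\gamma\q/|\q|^3$ (which the paper itself derives later, in Section \ref{sec:Moser}) and explicitly invoke $\dot\boldmu = 0$, whereas the paper's proof expands $\dot\boldpi$ term by term inside the cross product with $\boldmu$ in a single computation.
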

\begin{proof}
\beq\begin{split}
\frac{d\A}{dt}=&\frac{d\boldpi}{dt}\times \boldmu-\gamma \frac{\dot \q}{|\q|}+\gamma\frac{ (\q\cdot\dot\q) \q}{|\q|^3}\\
              =&(\dot q_0\v+q_0\dot\v-\dot\q v_0-\q\dot v_0)\times \boldmu -\gamma \frac{\dot \q}{|\q|}+\gamma\frac{(\q\cdot\dot\q) \q}{|\q|^3}\\
              =&\left[v_0\v-\left(\langle v,v\rangle q_0+\gamma \frac{q_0^2}{|\q|^{3/2}}\right)\q-v_0\v-\q\left(\frac{\gamma}{| \q|^{3/2}}-\langle v,v\rangle q_0-\gamma\frac{q_0^2}{|\q|^{3/2}} \right)\right]\times \boldmu\\
               &-\gamma \frac{\v}{|\q|}+\gamma\frac{(\q\cdot\v) \q}{|\q|^3}, \mbox{  using (\ref{eqmotion})}\\
=&-\frac{\gamma}{|\q|^{3/2}}(\q\times\boldmu)-\frac{\gamma}{|\q|^3} (\v( \q\cdot\q)-\q( \q\cdot\v) )=\frac{\gamma}{|\q|^{3/2}}(\q\times \boldmu-\q\times \boldmu)=0
\end{split}
\eeq
\end{proof}
\begin{remark}
In analogy with the Euclidean case, there is an another conserved vector, the binormal vector,
\[
{\boldsymbol B}=\boldpi-\frac{\gamma}{|\q||\boldmu|^2}(\boldmu\times \q)
\]
It can be shown that $\A=\B\times \boldmu$, and $\B$ is ``binormal'' because it is normal to both $\A$ and $\boldmu$. 
\end{remark}

Regard $\q, \v, \boldmu,\A,\B$ as vectors in $\R^3$. Since $\boldmu=\q\times\v$ is a constant of motion, 
it points in a fixed direction. Moreover it is orthogonal to $\q$ and $\v$. Therefore,  when moving along an orbit,
$\q$ varies on a plane (in  $\R^3$) orthogonal to $\boldmu$.
Furthermore, it can be shown that  $(\A\times \B)\cdot \q=0$,  $\A\cdot \boldmu=0$, and $\B\cdot\boldmu=0$.
Hence, if $\A$ and $\B$ are both non-zero  they form a basis for the plane where $\q$ lies (however, they 
are zero on circular orbits). 

Hence, each orbit must lie on a three-dimensional subspace. Moreover, since $\langle q,q\rangle=1$, the orbit is a curve on the two-sphere $S^2$. 
If   $\phi$ is used  to denote the  angle between $\q$ and the fixed direction of $\A$ then
\beq\label{eqeq}
\A\cdot\q=|\A||\q|\cos\phi=\q\cdot(\boldpi\times\boldmu)-\gamma \frac{\q\cdot\q}{|\q|}
\eeq
By permutation of the terms in the scalar triple product and note that $\boldpi=q_0\v-v_0\q$,
\[
\q\cdot(\boldpi\times\boldmu)=\boldmu\cdot(\q\times \boldpi)=q_0\boldmu\cdot(\q\times\v)=q_0|\boldmu|^2
\]
Let $|\q|=r$, and $q_0=z$. Rearranging equation (\ref{eqeq}) yields
\beq\label{eqtrajectory}\begin{split}
&\frac z r=\frac{\gamma}{|\boldmu|^2}\left(1+\frac{|\A|}{\gamma}\cos\phi\right)\\
&r^2+z^2=1
\end{split}
\eeq
 where $(r,\phi,z)$ are cylindrical  coordinates on the space spanned by $A=(0,\A)$, $B=(0,\B)$ and $e_0$
 (where $\{e_0,e_1,e_2,e_3\}$ is the standard basis for $\R^4$).

Write the equation \eqref{eqtrajectory} in spherical coordinates $(\rho,\phi,\theta)$ using the formulas
$r=\rho\sin\theta$, $\phi=\phi$, and $z=\rho\cos\theta$, we obtain
\[
\frac{1}{\tan\theta}=\frac{\gamma}{|\boldmu|^2}\left(1+\frac{|\A|}{\gamma}\cos\phi\right)\\
\]
This recovers the known formula for the orbits of the Kepler problem on the sphere \cite{Carinena2005} (compare also with \cite{Santoprete2008}). The orbits are always closed. Each orbit is obtained by intersecting a conical surface with vertex at the origin  and a sphere, which reduces to a circle when $|\A|=0$. In fact, the first of equations (\ref{eqtrajectory}) is a conical surface (and, in particular, one nappe of a  quadric conical surface for $\displaystyle{\frac{|\A|}{\gamma}<1}$), since for $\phi=\phi_0$ it reduces to the equation of a line through the origin, and for $z=z_0$ it reduces to a conic section. 
The constant $\displaystyle{\epsilon =|\e|= \frac{|\A|}{\gamma}}$ is called the \emph{eccentricity} of the orbit, which explains why $\e$
 is called eccentricity vector. When $\epsilon=0$, the orbits are circles, 
when $0 < \epsilon < 1$ they are curves corresponding to ellipses on the plane, $\epsilon= 1$ corresponding to  parabolas, 
and $\epsilon > 1$ corresponding to hyperbolas. Note that the orbits are confined 
to the upper hemisphere when $0<\epsilon<1$, while it is not the case when $\epsilon\vargeq 1$.
 We refer the reader to  \cite{Carinena2005, Santoprete2008} for a more detailed discussion of the orbits. 

The Hamiltonian  $H$ can be rewritten as
\[
H=\frac 1 2(|\boldpi|^2+|\boldmu|^2)+V(q)
\]
and the length of the eccentricity $\e$ vector is 
\[
|\e|^2=1+\frac{|\boldmu|^2}{\gamma^2}\(2H-|\boldmu|^2 \)
\]
There is another constant of motion
\[
E= H-\frac{|\boldmu|^2}{2}= \frac 1 2|\boldpi|^2+V(q)
\]
which satisfies the following equation:
\beq\label{eq:e}
|\e|^2=1+2\frac{E}{\gamma^2}|\boldmu|^2
\eeq
Define the \emph{modified  eccentricity vector} $\widetilde \e=-\nu{\e}$, where 
$${\nu=\frac{\gamma}{\sqrt{-2 E}}}$$
then the following hold
\[\begin{split}
\boldmu\cdot \widetilde \e&=0\\
\|\boldmu\|^2+\|\widetilde \e\|^2&=\nu^2>0
\end{split}
\]
where the second equation follows from (\ref{eq:e}). These relations are equivalent to 
\[(\boldmu+\widetilde \e)\cdot(\boldmu +\widetilde \e)=(\boldmu-\widetilde \e)\cdot(\boldmu-\widetilde \e)=\nu^2>0\]
which defines a smooth 4-dimensional manifold diffeomorphic to $S^2_\nu\times S^2_\nu$.

\section{Hodograph and Moser's regularization}\label{sec:Moser}

In this section, we describe the Moser's regularization for the Kepler problem on $S^3$. The approach follows closely Milnor's work \cite{Milnor} for the Euclidean case.
Without loss of generality, suppose  that $\boldmu=(0,0,\mu)$. Then $\mu = |\boldmu|$, $\q=(q_1,q_2,0)$ and
 $\v=(v_1,v_2,0)$. Furthermore, suppose that $\A = (|\A|,0,0) \neq 0$ and  let $\phi$ be the angle formed by $\q$ with $\A$ (as in the equation of the trajectory). Since $\A=\B\times\boldmu$,
it follows that $\displaystyle{|\B|=\frac{|\A|}{|\boldmu|}(0,1,0)}$.
Then $\boldmu\times\q=|\boldmu||\q|(-\sin\phi,\cos \phi,0)$, and from the definition of $\B$, it follows
\[
\boldpi=\B+\frac{\gamma}{|\mu|}(\boldmu\times \q)=\frac{|\A|}{|\boldmu|}(0,1,0)+
\frac{\gamma}{|\boldmu|}(-\sin\phi,\cos\phi,0)=\frac{\gamma}{|\boldmu|}(-\sin\phi,\frac{|\A|}{\gamma}+\cos\phi,0)
\]
Thus, for each orbit, $\boldpi$ moves along  a circle centered at 
$\displaystyle{\frac{|\A|}{|\boldmu|}}$, with radius $\displaystyle{\frac{\gamma}{|\boldmu|}}$ and lying 
in the plane through the origin that is orthogonal to $\boldmu$. 

Conversely, starting with a circle with the equation 
$$\boldpi=\frac{\gamma}{|\boldmu|}\left(-\sin\phi, \frac{|\A|}{\gamma}+\cos\phi,0\right)$$
a direct computation gives 
$$\boldmu \cos \theta = q_0 \boldmu = \q \times \boldpi = \frac{\gamma}{|\boldmu|}\sin\theta\left(0, 0, 1 +  \frac{|\A|}{\gamma} \cos \phi\right)$$
Hence, we  recover the equation of the orbit  in spherical coordinates 
$$\frac{1}{\tan \theta} = \frac{\gamma}{|\boldmu|^2}\left(1 + \frac{|\A|}{\gamma} \cos \phi\right)$$
It follows that the hodocycle completely determines the corresponding Kepler orbit.
We can now prove the  following theorem.

\begin{theorem}
 Fixing some constant value of $E=H-|\boldmu|^2/2<0$, consider the space $M_{E}^+$ 
of all the vectors $\boldpi$ such that $\boldpi\cdot\boldpi>2E$, together with a single improper point $\boldpi=\infty$.
Such  space possesses one and only one Riemannian metric $ds^2$ so that the arc-length parameter $\int ds$ along any circle 
$t\rightarrow \boldpi(t)$ is precisely equal to the parameter $\displaystyle{\int \frac{dt}{q_0|\q|}}$. This  metric is smooth and complete, with constant curvature 
$-2E$, and its geodesics are precisely the circles or lines $t\rightarrow \boldpi(t)$ associated with Kepler orbits.
\end{theorem}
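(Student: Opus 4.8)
The plan is to adapt Milnor's treatment of the Euclidean hodograph \cite{Milnor}. The whole statement reduces to two explicit computations: first, the curve $t\mapsto\boldpi(t)$ traced along an orbit together with its Euclidean speed in $\boldpi$-space; and second, the conformal factor that rescales the flat metric $|d\boldpi|^2$ so that the prescribed parameter $\int dt/(q_0|\q|)$ becomes arc length. Once the metric is written in closed form I expect it to coincide with a constant multiple of the round metric on a sphere, pulled back by inverse stereographic projection, at which point smoothness, completeness, the value of the curvature, and the geodesic property all follow from standard facts about that projection.

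First I would differentiate $\boldpi=q_0\v-v_0\q$ along the flow, inserting the equations of motion \eqref{eqmotion} and using $1-q_0^2=|\q|^2$. The terms proportional to $v_0\v$ cancel and the rest collapse to the clean hodograph equation
\[
\dot{\boldpi}=-\frac{\gamma}{|\q|^3}\,\q ,
\]
which is formally identical to its Euclidean counterpart; hence the Euclidean line element in $\boldpi$-space is $|d\boldpi|=(\gamma/|\q|^2)\,dt$. Demanding a metric $ds=\lambda(\boldpi)\,|d\boldpi|$ with $ds=dt/(q_0|\q|)$ forces $\lambda=|\q|/(\gamma q_0)$, and the energy integral $E=\tfrac12|\boldpi|^2+V(q)=\tfrac12|\boldpi|^2-\gamma q_0/|\q|$ eliminates $q_0$ and $|\q|$ to give $\lambda=2/(|\boldpi|^2-2E)$, a function of $\boldpi$ alone. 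Writing $k^2=-2E>0$, the candidate metric is
\[
ds^2=\frac{4}{(|\boldpi|^2+k^2)^2}\,|d\boldpi|^2 .
\]

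For uniqueness and existence I would note that, at fixed $E$, the hodograph circles pass through every point in every direction: through a point $\boldpi_0$ with prescribed tangent $\hat u$ there is exactly one circle lying in $\Span(\boldpi_0,\hat u)$ with the correct power of the origin (three conditions matching the three degrees of freedom of a circle in a plane). Since the prescribed parameter fixes the $ds$-length of each such tangent vector, the metric is determined pointwise in every direction; the fact that the computed $\lambda$ depends only on $|\boldpi|$ and not on the direction is precisely what makes these prescriptions consistent and assembles them into a genuine conformal metric, yielding both uniqueness and existence. The displayed form is $k^{-4}$ times the pullback of the round radius-$k$ metric under inverse stereographic projection; consequently $ds^2$ is smooth on all of $\boldpi$-space (the denominator is bounded below by $k^2>0$) and extends smoothly across the improper point $\boldpi=\infty$, making $M_E^+$ isometric to a round $3$-sphere and hence complete. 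Rescaling a curvature-$k^{-2}$ metric by the constant $k^{-4}$ multiplies the curvature by $k^4$, so the curvature is $k^2=-2E$, as claimed.

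The geodesic statement is the step I expect to require the most care, because one must identify the hodograph circles with the geodesics \emph{exactly}. The clean invariants are the power of the origin together with the plane of the circle: from $|\A|^2=\gamma^2+2E|\boldmu|^2$ (a rewriting of \eqref{eq:e}), each hodograph circle, of centre at distance $|\A|/|\boldmu|$ and radius $\gamma/|\boldmu|$, satisfies $|\mathbf c|^2-\rho_0^2=(|\A|^2-\gamma^2)/|\boldmu|^2=2E=-k^2$, and it lies in the plane through the origin orthogonal to $\boldmu$. On the other side, since the antipodal map of the sphere conjugates under stereographic projection to the involution $\boldpi\mapsto -k^2\boldpi/|\boldpi|^2$, the images of great circles are exactly the circles (and lines) invariant under this involution, which one checks are precisely the circles lying in a plane through the origin with $|\mathbf c|^2-\rho_0^2=-k^2$, together with the lines through the origin. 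As geodesics are unchanged by the constant rescaling of the metric, these are the geodesics of $ds^2$. Matching the two families in both directions — each hodograph circle is such a geodesic, and conversely each geodesic is realised by a Kepler orbit at energy $E$, the lines through the origin appearing in the limit $|\boldmu|\to 0$ of the radial orbits and closing up through the improper point — completes the argument, and it is this bookkeeping of the degenerate cases that will demand the most attention.
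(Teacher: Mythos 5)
Your proposal is correct and takes essentially the same route as the paper's proof: both follow Milnor, deriving the hodograph equation $\dot\boldpi=-\gamma\q/|\q|^3$, using the energy relation $\gamma q_0/|\q|=\tfrac12\left(|\boldpi|^2-2E\right)$ to pin down the conformal factor and obtain $ds^2=4\,d\boldpi\cdot d\boldpi/(|\boldpi|^2-2E)^2$, and passing to inverted/stereographic coordinates to handle the improper point $\boldpi=\infty$ and read off constant curvature $-2E$. The only difference is thoroughness: you make explicit the uniqueness argument (hodocycles pass through every point in every direction, so the prescribed parameter determines the metric) and the geodesic identification (circles in a plane through the origin with power $-k^2$, matched against the images of great circles via the antipodal involution), steps which the paper asserts with little or no detail; both of these added arguments are sound.
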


\begin{proof}
 From (\ref{eq:energy}) it is clear that if $E<0$ the orbits must have $q_0>0$ and thus are limited to the upper 
hemisphere. In this case $\displaystyle{\epsilon=\frac{|\A|}{\gamma}<1}$ and  only the space $M_{E}^+$ is relevant. 
In fact $\displaystyle{E=\frac 1 2 |\boldpi|^2-\gamma\frac{q_0}{(1-q_0^2)^{1/2}}}$ and thus, if $E<0$, $|\boldpi^2|>2E$, 
since in such case $q_0>0$. However, if $E>0$ things get more complicated.  

Using the equation of motions it is easy to show that 
 \[
\dot \boldpi=\dot q_0\v+q_o\dot \v-\dot v_0\q-v_0\dot\q=-\frac{\gamma \q}{|\q|^3}
\]
and thus $\displaystyle{|\dot \boldpi|=\frac{\gamma}{|\q|}}$. Dividing this equation by the definition of the time rescaling 
$\displaystyle{ds=\frac{\gamma}{q_0|\q|}dt}$ and using the fact that
\beq\frac{\gamma q_0}{|\q|}=\frac 1 2|\boldpi|^2-\frac 1 2(2H-|\boldmu|^2)\label{eq:energy}\eeq
we obtain
\[\left|\frac{d\boldpi}{ds}\right|=\frac{\gamma|q_0|}{|\q|}=\left|\frac 1 2|\boldpi|^2-E\right|\]

\beq\label{eq:metric}
ds^2=\frac{4d\boldpi\cdot d\boldpi}{(|\boldpi|^2-(2H-|\boldmu|^2))^2}=\frac{4d\boldpi\cdot d\boldpi}{(|\boldpi|^2-2E)^2}
\eeq

Thus, there is one and only one Riemannian metric on the spaces $M_{E}^+$ which satisfies our condition, and it is given by formula
(\ref{eq:metric}).

To describe what happens in a neighborhood of infinity, we work with the inverted velocity coordinate $\w=\frac{\boldpi}{|\boldpi|^2}$ (see, for example,  Milnor's paper \cite{Milnor} for a discussion of inversion).
Since the differential of $\w$ is $\displaystyle{d\w=\frac{(\boldpi\cdot\boldpi)d\boldpi-2(d\boldpi\cdot \boldpi)\boldpi}{(\boldpi\cdot\boldpi)^2}}$
we have $\displaystyle{d\w\cdot d\w=\frac{d\boldpi\cdot d\boldpi}{(\boldpi\cdot\boldpi)^2}}$.
Hence 
\beq\label{eq:metric_inverted}
ds^2=\frac{4d\w\cdot d\w}{(1-2E\w\cdot\w)^2}
\eeq
The metrics given in equation (\ref{eq:metric}) and (\ref{eq:metric_inverted}) have constant curvature $-2E$.
\end{proof}
Since $E < 0$, the corresponding metric space is a round $3$-sphere and $t \to \boldpi(t)$ is thus a geodesic on a round $3$-sphere.

\section{Ligon-Schaaf Regularization}\label{sec:LS-regularization}
\subsection{$\mf {so}(4)$ momentum map}
The components of the angular momentum and of an opportunely rescaled eccentricity vector form a Lie algebra under Poisson bracket which is isomorphic to $\mf {so}(4)$. This gives the momentum map of the Kepler problem on the sphere. 

Let $\mf g = \mf{so}(4)$. For a suitably chosen basis $\{X_1, X_2, X_3, Y_1, Y_2, Y_3\}$, the Lie bracket is
\begin{equation}
\label{eq:so4brackets}
[X_i, X_j] = [Y_i, Y_j] = \epsilon_{ijk} X_k \text{ and } [X_i, Y_j] = \epsilon_{ijk} Y_k
\end{equation}
The identification $\mf g \cong \mf{so}(3) \dsum \mf{so}(3)$ can be seen via substitution $\displaystyle{\frac{1}{2}(X_i + Y_i)}$ and $\displaystyle{\frac{1}{2}(X_i - Y_i)}$.
The basis can be thought of as coordinate functions on $\mf g^*$, then the Lie bracket defines a Poisson bracket $\{,\}_{\mf g}$ on $C^\infty(\mf g^*)$, which defines the Lie-Poisson structure on $\mf g^*$. 

We first write down the  brackets of the components of $\boldmu$ and $\A$:
\begin{lemma}\label{lem:muAbrackets}
The brackets of the components of $\boldmu$ and $\A$ are:
\beq\{\mu_i,\mu_j\}^*|_{TS^3}=\epsilon_{ijk}\mu_k,\quad \{\mu_i,A_j\}^*|_{TS^3}=\epsilon_{ijk}A_k, \quad \{A_i,A_j\}^*|_{TS^3}=-2( H-|\boldmu|^2)\epsilon_{ijk}\mu_k
\label{eq:poissonbrackets}\eeq
\end{lemma}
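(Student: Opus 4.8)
The plan is to obtain all three families of brackets by direct expansion from the Dirac--Poisson structure of Lemma~\ref{lemma:diracpoisson}, using bilinearity and the Leibniz rule, and then simplifying on $TS^3$ by means of the constraints $\langle q,q\rangle=1$ and $\langle q,v\rangle=0$ (equivalently $|\q|^2=1-q_0^2$ and $q_0v_0=-\q\cdot\v$). Throughout I would write the $\R^3$ vectors in index form, $\mu_i=\epsilon_{iab}q_av_b$ and $\pi_a=q_0v_a-v_0q_a$, with Latin indices running over $1,2,3$, and handle the contractions with the standard $\epsilon$--$\delta$ identities.

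For $\{\mu_i,\mu_j\}^*$ I would expand $\{\epsilon_{iab}q_av_b,\epsilon_{jcd}q_cv_d\}^*$ by Leibniz and insert the elementary brackets $\{q_\alpha,v_\beta\}^*=\delta_{\alpha\beta}-q_\alpha q_\beta$ and $\{v_\alpha,v_\beta\}^*=L_{\alpha\beta}$. The key simplification is that every correction term carries a factor $\epsilon_{iab}q_aq_b$ or $\epsilon_{jcd}q_cq_d$, which vanishes by antisymmetry, so only the canonical part survives and reproduces $\epsilon_{ijk}\mu_k$. Equivalently, and more conceptually, $\boldmu$ is the momentum map for the $SO(3)\subset SO(4)$ action rotating the spatial coordinates $(q_1,q_2,q_3)$ and $(v_1,v_2,v_3)$ simultaneously; this action preserves $S^3$ and the constraints, hence the Dirac bracket, so $\{\mu_i,\cdot\}^*$ generates the corresponding infinitesimal rotation. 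Since both $\boldmu$ and $\A$ are assembled from the $\R^3$ vectors $\boldpi,\q,\boldmu$, they transform as vectors, and covariance yields $\{\mu_i,\mu_j\}^*=\epsilon_{ijk}\mu_k$ and $\{\mu_i,A_j\}^*=\epsilon_{ijk}A_k$ at once (the non-polynomial piece $-\gamma q_j/|\q|$ is the $j$-th component of the vector $-\gamma\q/|\q|$ and behaves the same way).

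The substantive computation is $\{A_i,A_j\}^*$. Writing $A_j=\epsilon_{jab}\pi_a\mu_b-\gamma q_j/|\q|$, I would split the bracket into three parts: the bracket of the two $\boldpi\times\boldmu$ terms, the cross brackets between $\boldpi\times\boldmu$ and $-\gamma\q/|\q|$, and the bracket of the two Coulomb terms. The last vanishes because both factors are functions of $q$ alone and $\{q_\alpha,q_\beta\}^*=0$. For the first two I would expand by Leibniz, substitute the elementary brackets, and collect. Because $\boldpi$ involves the $q_0,v_0$ components, and because $\{v_\alpha,v_\beta\}^*=L_{\alpha\beta}$ is nonzero (unlike the Euclidean case, where velocities commute), a large number of terms appear; this bookkeeping is where I expect the main difficulty to lie.

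I expect the answer to collapse after repeatedly using $\boldmu\cdot\q=0$, $\boldmu\cdot\boldpi=0$, the constraint identities above, and the energy relation $H=\frac12(|\boldpi|^2+|\boldmu|^2)+V$ from Section~3. In particular the extra $L_{\alpha\beta}$ contributions are exactly what I anticipate will shift the Euclidean coefficient $-2H$ to $-2(H-|\boldmu|^2)$; recognizing this regrouping, so that all terms organize into a single multiple of $\epsilon_{ijk}\mu_k$ with the stated coefficient, is the crux of the proof. To keep the algebra under control I would treat the purely spatial contractions separately from those involving the $0$-index, and verify that the canonical part reproduces the known Euclidean Runge--Lenz bracket as a consistency check.
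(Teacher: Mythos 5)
Your proposal is correct, and for the only identity the paper actually verifies --- the third, hardest one --- it follows the same basic route: expand $\{A_i,A_j\}^*$ bilinearly from the Dirac--Poisson structure of Lemma \ref{lemma:diracpoisson} and simplify using $\epsilon$--$\delta$ identities, the constraints, and the energy relation. The differences are organizational, but they bear exactly on the bookkeeping you flag as the main difficulty. The paper never expands down to the elementary brackets $\{q_\alpha,v_\beta\}^*$, $\{v_\alpha,v_\beta\}^*$ in the final computation; instead it first tabulates the brackets among the composite building blocks, notably
\[
\{\mu_i,\pi_j\}^*|_{TS^3}=\epsilon_{ijk}\pi_k,\qquad
\{\pi_i,\pi_j\}^*|_{TS^3}=q_iv_j-q_jv_i,\qquad
\left.\left\{\pi_i,\frac{1}{|\q|}\right\}^*\right|_{TS^3}=\frac{q_0q_i}{|\q|^3},
\]
and then assembles $\{A_i,A_j\}^*$ entirely at the level of $\boldpi$, $\boldmu$ and $\q/|\q|$. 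That intermediate table is precisely the device that tames the ``large number of terms'' you anticipate; in your plan the same cancellations must be rediscovered inside one big elementary expansion, which works but is substantially heavier, so you would do well to isolate such a table as a preliminary step. In the other direction, your momentum-map/equivariance argument for $\{\mu_i,\mu_j\}^*$ and $\{\mu_i,A_j\}^*$ is cleaner than anything written in the paper, which proves only the third identity and leaves the first two implicit in its $\{\mu_i,\cdot\}^*$ entries; your argument is sound because the diagonal spatial $SO(3)$ preserves both constraints $c_1$, $c_2$ (indeed $\{\mu_i,c_1\}=\{\mu_i,c_2\}=0$, so $\{\mu_i,\cdot\}^*$ even coincides with the canonical bracket), while $\boldpi$, $\boldmu$, $\q/|\q|$ transform as vectors and $q_0$, $v_0$, $|\q|$ are invariant. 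Finally, your expectation that the non-vanishing velocity brackets are the source of the shift from the Euclidean coefficient $-2H$ to $-2(H-|\boldmu|^2)$ is consistent with the computation: the new contributions enter through $\{\pi_i,\pi_j\}^*\neq 0$ above. There is no gap in the plan; carried out, it yields the stated result.
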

In the Appendix, we give a sketch of the proof of the Lemma above and of  the Proposition below.
\begin{proposition}\label{prop:nu}
Let $\mf g = \mf{so}(4)$, $C(H) = H + \sqrt{\gamma^2 + H^2}$ and
$$\eta(|\boldmu|^2, H) = \frac{-|\boldmu|^2+C(H)}{|\A|^2}$$
Then the following is the momentum map of the Kepler problem on the sphere 
$$\rho = (\boldmu, \eta(|\boldmu|^2, H)\A) : TS^3 \to \mf g^*$$
It is a Poisson map with respect to the bracket $\{,\}^*$ on $TS^3$ and the Lie-Poisson bracket on $\mf g^*$.
\end{proposition}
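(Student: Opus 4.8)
The plan is to show that $\rho=(\boldmu,\eta\A)$ is a Poisson map and then invoke the standard correspondence between Poisson maps into $\mf g^*$ and momentum maps. Since a Poisson bracket is a derivation in each slot and the linear coordinate functions $X_i,Y_i$ generate $C^\infty(\mf g^*)$, it suffices to verify that the pullbacks $\rho^*X_i=\mu_i$ and $\rho^*Y_i=(\eta\A)_i$ reproduce the Lie--Poisson relations \eqref{eq:so4brackets}, that is,
\[
\{\mu_i,\mu_j\}^*=\epsilon_{ijk}\mu_k,\quad \{\mu_i,(\eta\A)_j\}^*=\epsilon_{ijk}(\eta\A)_k,\quad \{(\eta\A)_i,(\eta\A)_j\}^*=\epsilon_{ijk}\mu_k .
\]
The first relation is immediate from Lemma \ref{lem:muAbrackets}. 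For the mixed relation, the key point is that $\eta$ is a function of $H$ and $|\boldmu|^2$ only, and both Poisson--commute with every $\mu_i$: $H$ because $\boldmu$ is an integral of motion, and $|\boldmu|^2$ because $\{\mu_i,|\boldmu|^2\}^*=2\epsilon_{ijk}\mu_j\mu_k=0$ by antisymmetry. Hence $\{\mu_i,\eta\}^*=0$, and the Leibniz rule collapses the bracket to $\eta\{\mu_i,A_j\}^*=\eta\,\epsilon_{ijk}A_k=\epsilon_{ijk}(\eta\A)_k$, again by Lemma \ref{lem:muAbrackets}.

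The substance of the proof, and the step I expect to be the main obstacle, is the self--bracket $\{(\eta\A)_i,(\eta\A)_j\}^*$. The difficulty is precisely that $|\boldmu|^2$ is \emph{not} a Casimir of $\{,\}^*$, so the rescaling $\eta$ does not pass freely through the bracket: expanding by Leibniz yields $\eta^2\{A_i,A_j\}^*$ together with correction terms built from $\{A_i,\eta\}^*$. I would compute the latter by the chain rule, using $\{A_i,H\}^*=0$ (conservation of $\A$) and $\{A_i,|\boldmu|^2\}^*=2(\boldmu\times\A)_i$, which follows from the second relation of Lemma \ref{lem:muAbrackets}. The resulting cross--product terms simplify because $\A\cdot\boldmu=0$, so that $\A\times(\boldmu\times\A)=|\A|^2\boldmu$; this converts the correction into a scalar multiple of $\epsilon_{ijk}\mu_k$.

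Feeding in $\{A_i,A_j\}^*=-2(H-|\boldmu|^2)\epsilon_{ijk}\mu_k$ and the energy identity $|\A|^2=\gamma^2+2H|\boldmu|^2-|\boldmu|^4$ (equivalently \eqref{eq:e}), the whole expression reduces to $\big(\,\cdots\,\big)\epsilon_{ijk}\mu_k$ for an explicit scalar depending on $\eta$ and its derivative in $|\boldmu|^2$. The bracket equals $\epsilon_{ijk}\mu_k$ exactly when this scalar is $1$, i.e. when $\eta$ satisfies a first--order condition in $|\boldmu|^2$; integrating that condition shows it is equivalent to $|\boldmu|^2+\eta^2|\A|^2$ being a function of $H$ alone, and the stated $\eta$ is the solution for which this pulled--back Casimir equals $C(H)$. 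The only delicate points are the bookkeeping of the $\epsilon$--symbol contractions and confirming that the normalization $C(H)=H+\sqrt{\gamma^2+H^2}$ makes the scalar identically $1$.

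Finally, having established that $\rho$ is a Poisson morphism from $(TS^3,\{,\}^*)$ to $(\mf g^*,\{,\}_{\mf g})$, I would conclude by the standard fact that a Poisson map into the dual of a Lie algebra carrying its Lie--Poisson structure is the momentum map of the Hamiltonian $\mf g$--action generated by the component functions $\langle\rho,\xi\rangle$, $\xi\in\mf g$. This identifies $\rho$ as the $\mf{so}(4)$ momentum map of the Kepler problem on the sphere.
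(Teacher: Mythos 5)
Your computational core coincides with the paper's own proof: expand $\{\eta A_i,\eta A_j\}^*$ by the Leibniz rule, evaluate $\{\eta,A_l\}^*$ by the chain rule using $\{A_i,H\}^*=0$ and $\{A_i,|\boldmu|^2\}^*=2(\boldmu\times\A)_i$, collapse the correction terms with $\A\times(\boldmu\times\A)=|\A|^2\boldmu$ (valid since $\A\cdot\boldmu=0$), and reduce the self--bracket requirement of \eqref{eq:so4brackets} to the first--order condition $\frac{\partial}{\partial|\boldmu|^2}\bigl(|\boldmu|^2+\eta^2|\A|^2\bigr)=0$, integrated to $\eta^2|\A|^2=-|\boldmu|^2+C(H)$. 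Your explicit treatment of the mixed bracket via $\{\mu_i,\eta\}^*=0$ is correct (the paper leaves it implicit), and reading ``momentum map'' as ``Poisson map into $\mf g^*$'' is exactly the proposition's own formulation.

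The genuine gap is in how $C(H)$ gets determined. You describe the last delicate point as ``confirming that the normalization $C(H)=H+\sqrt{\gamma^2+H^2}$ makes the scalar identically $1$,'' but the scalar equals $1$ for \emph{every} choice of the integration ``constant'' $C(H)$: once $\eta^2|\A|^2=-|\boldmu|^2+C(H)$, the derivative condition holds identically, no matter what $C$ is. So your check would succeed vacuously and cannot single out the stated function; likewise ``the solution for which the pulled--back Casimir equals $C(H)$'' is circular, since any antiderivative constant realizes itself as that Casimir. What actually fixes $C(H)$ --- and this is the final step of the paper's proof that your outline is missing --- is regularity of $\eta$ on the circular orbits, where $|\A|^2=0$ and the quotient $(-|\boldmu|^2+C(H))/|\A|^2$ is a priori indeterminate. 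For $\rho$ to be defined there the numerator must vanish simultaneously, i.e.\ $|\boldmu|^2=C(H)$ on the locus $|\A|^2=0$; substituting into $|\A|^2=\gamma^2+|\boldmu|^2(2H-|\boldmu|^2)=0$ gives $\gamma^2+C(H)\bigl(2H-C(H)\bigr)=0$, whose positive root is $C(H)=H+\sqrt{\gamma^2+H^2}$. Without this argument, your proof establishes the bracket relations for an arbitrary $C(H)$ wherever $\eta$ is smooth, but neither justifies the specific $\eta$ in the statement nor shows that $\rho$ extends across the circular orbits of $TS^3$.
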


\subsection{Delaunay Vector field}
Let $(x, y)$ be the coordinates on $T\RR^4 \cong \RR^4 \dsum \RR^4$. Then $TS^3 \subset T\RR^4$ is given by $\<x, y\> = 0$.
Let $T^+S^3=\{(x,y)\in TS^3|y\neq 0\}$ be the tangent bundle of $S^3$ less its zero section and $\tilde \omega = \omega|_{T^+S^3}$ the restriction to $T^+S^3$ of the standard symplectic form $\omega$ on $T\RR^4$. Consider the Delaunay Hamiltonian on $T^+S^3$:
\beq
{\mathcal H}=-\frac 1 2\frac{\gamma^2}{\langle y,y\rangle}
\eeq
which resembles  the Kepler Hamiltonian written in Delaunay coordinates
(see for example \cite{Abraham}). It is clear that $\mc H$ is invariant under the standard action of $SO(4)$ on $T^+S^3$.

The integral curves of the Delaunay vector field $X_{\mathcal H}$ 
satisfy
\beq\begin{split}\label{eq:Delaunay_vectorfield}
\frac{dx}{dt}&=\frac{\gamma^2}{\langle y,y\rangle^2}y  \\
\frac{dy}{dt}&=-\frac{\gamma^2}{\langle y,y\rangle}x 
\end{split}
\eeq
It can be proved that the Delaunay vector field $X_{\mathcal H}$ is a time rescaling of the Hamiltonian vector field of the geodesic flow on the unit sphere. The space $\mf{so}(4)^*$ can be naturally identified with $\bigwedge^2 (\RR^4)^*$. Under this identification, the momentum mapping of the standard action of $SO(4)$ on $T^+S^3$ is:
$${\mathcal J}(x, y)=x\wedge y \in \bigwedge{}^2 (\RR^4)^* \cong \mf{so}(4)$$
A more detailed description of these facts can be found in \cite{Cushman}.

\subsection{Regularization}
All the orbits of the Kepler problem on the sphere that have $E<0$ can be regularized at once, with a slightly modified Ligon-Schaaf map. Let 
\[J:TS^3 \to \mf{so}(4) : (q,v)\mapsto\left(\boldmu,\widetilde \e\right)\]
We begin our search for a  Ligon-Schaaf map by noting that the image of 
${\mathcal J}$
is the same as the image of 
$J$ 
(see \cite{Cushman} for a detailed study of the Delaunay vector field and its momentum map). This suggests that the two maps are somewhat related, even though $J$ is not a momentum map.  Note that this situation differs from the case of the Kepler problem in $\R^3$, studied by Cushman \cite{Cushman}, where the maps that are related are the momentum maps. 
\begin{theorem}
 The smooth map $\Phi:(q,v)\mapsto(x,y)$ intertwines the momentum map ${\mathcal J}$ and the map $J$, that is  $\Phi^*{\mathcal J}=J$,
 if an only if
\beq\label{eq:Phi}
\Phi : (q,v) \mapsto (x,y)=\left(\alpha\sin\varphi+\beta\cos\varphi,\nu(-\alpha\cos\varphi+\beta\sin\varphi)\right)
\eeq
where 
\begin{align}
&\alpha=(\alpha_0,\boldalpha)=\left(\frac{1}{\nu q_0}\q \cdot \boldpi,\frac{\q}{|\q|}-\frac{\q \cdot \boldpi}{\gamma q_0}\boldpi\right) \label{eq:alpha}\\
&\beta=(\beta_0,\boldbeta)=\left(\frac{|\q|}{\gamma q_0}\boldpi\cdot\boldpi-1,\frac{|\q|}{\nu q_0}\boldpi\right)\label{eq:beta}
\end{align}
$\varphi$ is an arbitrary smooth real function and $\displaystyle{\nu = \frac{\gamma}{\sqrt{-2E}}}$.
\end{theorem}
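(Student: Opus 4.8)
The plan is to reduce the intertwining condition $\Phi^*\mathcal J = J$, which is the pointwise identity $x\wedge y = J(q,v)$ in $\bigwedge^2(\RR^4)^*\cong\mf{so}(4)$, to two vector equations in $\RR^3$, and then to treat the two implications separately. Writing $x=(x_0,\mathbf x)$ and $y=(y_0,\mathbf y)$, a decomposable $2$-form splits into its $e_0\wedge e_i$ part $x_0\mathbf y-y_0\mathbf x$ and its $e_i\wedge e_j$ part $\mathbf x\times\mathbf y$; under the splitting $\mf{so}(4)\cong\mf{so}(3)\dsum\mf{so}(3)$ these are exactly the two $\mf{so}(3)$ components. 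Since $J(q,v)=(\boldmu,\widetilde\e)$, the intertwining condition is therefore equivalent to the pair
\[
\mathbf x\times\mathbf y=\boldmu,\qquad x_0\mathbf y-y_0\mathbf x=\widetilde\e .
\]

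For the ``if'' direction I would substitute the ansatz \eqref{eq:Phi} directly. The key simplification is that the phase cancels: since $\alpha\wedge\alpha=\beta\wedge\beta=0$ and $\sin^2\varphi+\cos^2\varphi=1$, one gets $x\wedge y=\nu\,\alpha\wedge\beta$ independently of $\varphi$ --- which is precisely why $\varphi$ may be an arbitrary smooth function. It then remains to match the two components of $\nu\,\alpha\wedge\beta$. The magnetic part is quick: using $\boldpi\times\boldpi=0$ and the identity $\q\times\boldpi=q_0\boldmu$ (immediate from $\boldpi=q_0\v-v_0\q$) one finds $\nu\,\boldalpha\times\boldbeta=\tfrac1{q_0}\,\q\times\boldpi=\boldmu$. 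The electric part $\nu(\alpha_0\boldbeta-\beta_0\boldalpha)=\widetilde\e$ is the computational heart; I would evaluate it using the constraints defining $TS^3$ (namely $q_0^2+|\q|^2=1$ and $q_0v_0+\q\cdot\v=0$), the consequent identity $\q\cdot\boldpi=-v_0$, the energy relation $\langle v,v\rangle=|\boldpi|^2+|\boldmu|^2$, and the definitions $\A=\boldpi\times\boldmu-\gamma\q/|\q|$, $\widetilde\e=-\tfrac{\nu}{\gamma}\A$, $\nu=\gamma/\sqrt{-2E}$. Rewriting $\boldpi\times\boldmu$ purely in terms of $\boldpi$ and $\q$ via the constraints is the step that turns this into a bookkeeping exercise.

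For the ``only if'' direction the subtlety is that $(x,y)\mapsto x\wedge y$ is far from injective: its fibre over a rank-two $2$-form is the set of all pairs spanning the same oriented plane with the same area, an $SL(2,\RR)$-orbit. To recover the stated one-parameter family I would use that $\Phi$ takes values in $T^+S^3$, so $\langle x,x\rangle=1$ and $\langle x,y\rangle=0$. Combined with the automatic Gram identity $\langle x,x\rangle\langle y,y\rangle-\langle x,y\rangle^2=|x\wedge y|^2=|\boldmu|^2+|\widetilde\e|^2=\nu^2$ (the last equality being $\|\boldmu\|^2+\|\widetilde\e\|^2=\nu^2$ from Section~3), this forces $\langle y,y\rangle=\nu^2$, and the orthogonality and normalization conditions then cut the $SL(2,\RR)$ fibre down to an $SO(2)$ worth of freedom --- exactly rotation by $\varphi$ inside the fixed plane $\Pi=\img(x\wedge y)$. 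To finish I would verify that the explicit $\alpha,\beta$ of \eqref{eq:alpha}--\eqref{eq:beta} form the distinguished orthonormal frame of $\Pi$, i.e.\ $\langle\alpha,\alpha\rangle=\langle\beta,\beta\rangle=1$ and $\langle\alpha,\beta\rangle=0$ (a check that again consumes the energy relation and the definition of $\nu$), and that they induce the correct orientation, so that \eqref{eq:Phi} exhausts the fibre.

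The main obstacle is twofold. Computationally, the electric-part identity $\nu(\alpha_0\boldbeta-\beta_0\boldalpha)=\widetilde\e$ and the orthonormality of $\{\alpha,\beta\}$ are where all the defining relations of the problem must be fed in at once, and keeping that algebra under control is the real work. Conceptually, the ``only if'' direction hinges on correctly identifying the constraint set --- membership in $T^+S^3$ at the fixed energy level --- that reduces the $SL(2,\RR)$ fibre of $\mathcal J$ to the $SO(2)$ parametrized by $\varphi$; without pinning down this reduction the stated equivalence would simply fail, since many pairs $(x,y)$ share the same $x\wedge y$.
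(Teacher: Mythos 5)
Your proposal is correct, and while it opens exactly as the paper does --- reducing $\Phi^*\mathcal{J}=J$ to the pair $\x\times\y=\boldmu$, $x_0\y-y_0\x=\widetilde\e$ and then invoking the constraints $\langle x,x\rangle=1$, $\langle x,y\rangle=0$ defining $T^+S^3$ --- the way you extract the parametrization is genuinely different. The paper stays in $\RR^3$: assuming $\q\times\v\neq 0$ it writes $\x=a\q+b\boldpi$, $\y=c\q+d\boldpi$, gets $ad-bc=1/q_0$ from the magnetic equation and $x_0,y_0$ from the electric one, and then solves the resulting quadratic system \eqref{eq:sys4a}--\eqref{eq:sys4b} by elimination, with $\varphi$ appearing as the parameter of the solution set; the frame $(\alpha,\beta)$ is thereby \emph{derived} rather than verified. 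You instead work with the fibre of the wedge map in $\RR^4$: the fibre over the nonzero decomposable element $J(q,v)$ is an $SL(2,\RR)$-orbit, the Lagrange--Gram identity together with $|\boldmu|^2+|\widetilde\e|^2=\nu^2$ forces $\langle y,y\rangle=\nu^2$, and the $T^+S^3$ constraints then cut the fibre down to the $SO(2)$ of rotations of a fixed oriented orthogonal frame, so everything reduces to checking that $(\alpha,\beta)$ is orthonormal and that $\nu\,\alpha\wedge\beta=J(q,v)$. Your route buys several things: the same core computation serves both implications (the paper's ``if'' direction is dispatched with ``a calculation shows''); the observation that $x\wedge y=\nu\,\alpha\wedge\beta$ independently of $\varphi$ explains structurally why $\varphi$ is arbitrary; the orthonormality relations, which the paper only establishes afterwards inside the proof of Corollary \ref{co:E}, are put to work where they logically belong; and you never need the paper's hypothesis $\q\times\v\neq0$, only $J(q,v)\neq0$, which is automatic from $\nu^2>0$. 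What the paper's elimination buys in exchange is a construction of the formulas \eqref{eq:alpha}--\eqref{eq:beta} from scratch rather than a verification of a given ansatz. One small inaccuracy you should fix: the electric/magnetic splitting of $\bigwedge^2(\RR^4)^*$ is a vector-space decomposition, but its two summands are \emph{not} the two factors of the Lie-algebra splitting $\mf{so}(4)\cong\mf{so}(3)\dsum\mf{so}(3)$; those are spanned by the self-dual and anti-self-dual parts $\frac{1}{2}(\boldmu\pm\widetilde\e)$, corresponding to the paper's substitution $\frac{1}{2}(X_i\pm Y_i)$. Since your argument uses only the linear decomposition, this mislabeling is harmless to the proof.
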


\begin{proof}
Suppose $\Phi$ intertwines the momentum map ${\mathcal J}$ and the map $J$. Write $x=(x_0,\x)\in \R^3\times \R=\R^4$, $y=(y_0,\y)\in \R^3\times \R=\R^4$. Then $\Phi^*{\mathcal J}=J$ is equivalent to

\beq\label{eq:equalityofmaps}
\begin{split}
\x\times \y &=\q\times \v\\
\x_0\y-y_0\x &=\frac{1}{\sqrt{-2E}}\left(\gamma\frac{\q}{|\q|}-\boldpi\times(\q\times \v))   \right)=M\q+N\boldpi
\end{split}
\eeq
where
\beq
M=\frac{1}{\sqrt{-2E}}\left( \frac{\gamma}{|\q|}-\frac{\boldpi\cdot\boldpi}{q_0} \right) \quad \mbox{and}\quad N=\frac{1}{\sqrt{-2E}}\left(\frac{\boldpi\cdot \q}{q_0}\right)
\eeq
Suppose $\displaystyle{\q\times \v=\frac{1}{q_0}(\q\times\boldpi)\neq 0}$, then it follows that $\x$ and $\y$ lie on the same plane of $\q$ and $\boldpi$. Since $\q$ and $\boldpi$ are linearly independent, we obtain
\beq\label{eq:sys1}
\begin{split}
 \x&=a\q+b\boldpi\\
 \y&=c\q+d\boldpi\\
\end{split}
\eeq
Since $\displaystyle{\q\times \v=\frac{1}{q_0}(\q\times\boldpi)=\x\times \y=(ad-bc)(\q\times \boldpi)}$ we find $\displaystyle{ad-bc=\frac{1}{q_0}}$.
Substituting (\ref{eq:sys1}) into  (\ref{eq:equalityofmaps}) and using the linear independence of $\q$ and $\boldpi$ gives a set of linear equations for $\x_0$ and $\y_0$. Since $\displaystyle{ad-bc=\frac{1}{q_0}}$ these equations may be solved to give
\[\begin{split}
x_0&=(aN-bM)q_0\\
y_0&=(cN-dM)q_0
\end{split}
\]
Since $(x,y)\in T^+S^3$,
\beq\label{eq:sys3} 
\begin{split}
 1&=\x\cdot \x+x_0^2\\
 0&=\x\cdot\y+x_0y_0
\end{split}
\eeq
Substituting the expressions for $\x$ and $\y$ and the expressions for $x_0$ and $y_0$ above into (\ref{eq:sys3}) yields
\begin{align}
&1=a^2(\q\cdot \q)+\left(\frac{\boldpi\cdot \q}{\sqrt{-2E}}a+\frac{\gamma q_0}{|\q|\sqrt{-2E}}b\right)^2\label{eq:sys4a}\\
&0=ac\left(\q\cdot\q+\frac{(\boldpi\cdot\q)^2}{(-2E)}\right)+(ad+bc)\left(\gamma\frac{(\boldpi\cdot\q)q_0}{|\q|(-2E)}\right)
+bd\left(\gamma^2\frac{q_0^2}{|\q|^2(-2E)}\right)\label{eq:sys4b}
\end{align}
where we have used the identities
\beq\begin{split}
\q\cdot\q+N^2q_0^2=\q\cdot\q+\frac{(\boldpi\cdot\q)^2}{(-2E)}\\
(\q\cdot\boldpi)-MNq_0^2=\gamma\frac{(\boldpi\cdot\q)q_0}{|\q|(-2E)}\\
\boldpi\cdot\boldpi+M^2q_0^2=\gamma^2\frac{q_0^2}{|\q|^2(-2E)}
\end{split}\eeq
which follow from the definition of $M$ and $N$ and the identity
\[
\boldpi\cdot\boldpi-\gamma\frac{q_0}{|\q|}=2E+\gamma\frac{q_0}{|\q|}
\]
Multiplying  (\ref{eq:sys4a}) by $c$ and (\ref{eq:sys4b}) by $-a$ and adding the resulting equations gives
\[
c=-a\frac{\gamma(\boldpi\cdot \q)}{|\q|(-2E)}-b\frac{\gamma^2q_0}{|\q|^2(-2E)}
\]
Similarly multiplying (\ref{eq:sys4a}) by $d$ and (\ref{eq:sys4b}) by $-a$ yields
\[
d=a\left(\q\cdot \q+\frac{(\boldpi\cdot\q)^2}{(-2E)}\right)\frac{1}{q_0}+b\frac{\gamma (\boldpi\cdot \q)}{|\q|(-2E)}
\]

All solutions of (\ref{eq:sys4a}) are parametrized by
\beq
\begin{split}
&a=\frac{1}{|\q|}\sin\varphi\\
&b=\frac{\sqrt{-2E}|\q|}{\gamma q_0}\cos\varphi-\frac{\boldpi\cdot \q}{q_0\gamma}\sin\varphi
\end{split}
\eeq
where $\theta$ is an arbitrary function. 
Substituting the previous equations in the expressions for $c$ and $d$ gives
\beq
\begin{split}
&c=-\frac{\gamma}{|\q|\sqrt{-2E}}\cos\varphi\\
&d=\frac{|\q|}{q_0}\sin\varphi+\frac{(\boldpi\cdot \q)}{q_0\sqrt{-2E}}\cos\varphi
\end{split}
\eeq

Conversely, a calculation shows that a map $\Phi$ of the form (\ref{eq:Phi}) intertwines the momentum map ${\mathcal J}$
 and the map $J$.
\end{proof}

\begin{corollary}\label{co:E}
 $\Phi$ intertwines $E$ and the Delaunay Hamiltonian, that, is, $\Phi^*{\mathcal H}=E$.
\end{corollary}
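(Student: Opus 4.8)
The plan is to reduce the identity $\Phi^*\mathcal H = E$ to the single statement that $\langle y,y\rangle$ is constant along $\Phi$, equal to $\nu^2$. Indeed, from the definition $\mathcal H = -\tfrac12\gamma^2/\langle y,y\rangle$ and the relation $\nu^2 = \gamma^2/(-2E)$, once we know $\langle y,y\rangle = \nu^2$ we get at once $\Phi^*\mathcal H = -\tfrac12\gamma^2/\nu^2 = -\tfrac12(-2E) = E$. So the whole content of the corollary is the computation of $\langle y,y\rangle$.

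From the explicit form \eqref{eq:Phi} we have $y = \nu(-\alpha\cos\varphi + \beta\sin\varphi)$, hence
\[
\langle y,y\rangle = \nu^2\left(\langle\alpha,\alpha\rangle\cos^2\varphi - 2\langle\alpha,\beta\rangle\sin\varphi\cos\varphi + \langle\beta,\beta\rangle\sin^2\varphi\right).
\]
Thus it suffices to prove that $\alpha$ and $\beta$ are orthonormal in $\RR^4$, namely
\[
\langle\alpha,\alpha\rangle = \langle\beta,\beta\rangle = 1, \qquad \langle\alpha,\beta\rangle = 0.
\]
Granting this, the parenthesis collapses to $\cos^2\varphi + \sin^2\varphi = 1$, so $\langle y,y\rangle = \nu^2$ independently of $\varphi$, as needed.

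For the orthonormality I see two routes. The quick one exploits that $\Phi$ takes values in $T^+S^3$ for \emph{every} choice of the function $\varphi$: the constraint $\langle x,x\rangle = 1$ from \eqref{eq:sys3} reads, after writing $x = \alpha\sin\varphi + \beta\cos\varphi$, as $\langle\alpha,\alpha\rangle\sin^2\varphi + 2\langle\alpha,\beta\rangle\sin\varphi\cos\varphi + \langle\beta,\beta\rangle\cos^2\varphi = 1$. Since $\alpha$ and $\beta$ do not depend on $\varphi$ while $\varphi$ may be assigned any real value at a given point, matching the coefficients of $\cos^2\varphi$, $\sin^2\varphi$ and $\sin\varphi\cos\varphi$ forces exactly $\langle\alpha,\alpha\rangle = \langle\beta,\beta\rangle = 1$ and $\langle\alpha,\beta\rangle = 0$. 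The direct route instead substitutes \eqref{eq:alpha}--\eqref{eq:beta} and expands: setting $s = \q\cdot\boldpi$ and using $1/\nu^2 = -2E/\gamma^2$, the cross terms in $\langle\alpha,\alpha\rangle$ and in $\langle\beta,\beta\rangle$ cancel precisely because of the energy relation $\boldpi\cdot\boldpi = 2E + 2\gamma q_0/|\q|$ following from \eqref{eq:energy}, while $\langle\alpha,\beta\rangle = 0$ drops out after combining $\alpha_0\beta_0$ with $\boldalpha\cdot\boldbeta$.

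The only genuine subtlety, and the step I would double-check, is this orthonormality, since it is where the scalings by $\nu$ and $\gamma$ and the energy constant $E$ must conspire; in the direct route it hinges entirely on \eqref{eq:energy}. The point that makes the slick route legitimate is that $\langle x,x\rangle \equiv 1$ holds identically in $\varphi$ rather than for a single distinguished value, which is guaranteed because $\varphi$ enters \eqref{eq:Phi} as a free parameter with $(x,y)\in T^+S^3$ throughout. Once orthonormality is established, the corollary is immediate.
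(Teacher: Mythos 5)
Your reduction of the corollary to the orthonormality of $\alpha$ and $\beta$, and from there to $\langle y,y\rangle=\nu^2$, is exactly the paper's strategy, and your ``direct route'' \emph{is} the paper's proof: the authors expand $\langle\alpha,\alpha\rangle$, $\langle\beta,\beta\rangle$ and $\langle\alpha,\beta\rangle$ from \eqref{eq:alpha}--\eqref{eq:beta} and cancel everything using the energy identity $\tfrac12\boldpi\cdot\boldpi-\gamma q_0/|\q|=E$ coming from \eqref{eq:energy}, obtaining $1,1,0$. Your preferred ``quick route'' is genuinely different: it extracts orthonormality from the statement that \eqref{eq:Phi} lands in $T^+S^3$ for \emph{every} choice of $\varphi$, by matching coefficients of $\sin^2\varphi$, $\cos^2\varphi$ and $\sin\varphi\cos\varphi$ in $\langle x,x\rangle\equiv 1$. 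The coefficient-matching itself is unimpeachable. The caveat is the premise: in the paper, the fact that every map of the form \eqref{eq:Phi} takes values in $T^+S^3$ is the converse half of the theorem, which is only asserted there (``Conversely, a calculation shows\dots''), and the calculation that would substantiate it is essentially the same orthonormality computation that the corollary's proof carries out (hence the paper's ``as it will be shown below''). So, read against the paper's logical structure, the quick route risks assuming what is to be proved. It can, however, be made honest without falling back on the direct computation: in the proof of the theorem, the parametrization of $(a,b)$ by $\varphi$ was chosen precisely so that \eqref{eq:sys4a} --- which is the constraint $\langle x,x\rangle=1$ rewritten --- collapses to $\sin^2\varphi+\cos^2\varphi=1$, so the constraint does hold identically in $\varphi$ by construction; citing that step of the forward derivation, rather than the theorem's statement, gives you $\langle x,x\rangle\equiv 1$ legitimately, after which your argument closes the corollary with no further computation. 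Both routes are correct; the direct one is self-contained, while the quick one buys brevity at the cost of leaning on the theorem's proof details.
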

\begin{proof}

\[
(\Phi^*{\mathcal H})(q,v)=-\frac 1 2\frac{\gamma^2}{\langle y,y\rangle}=-\frac 1 2\frac{(-2E)\gamma^2}{\gamma^2|-\alpha \cos\varphi+\beta\sin\varphi|^2}
=E(q,v)\]
since $\langle \alpha,\alpha\rangle=1$, $\langle\beta,\beta\rangle=1$, $\langle\alpha,\beta\rangle=0$, as it will be shown below. 
\begin{align*}
\langle \alpha,\alpha\rangle &=1-2\frac{(\boldpi\cdot\q)^2}{|\q|\gamma q_0}+\frac{(\boldpi\cdot\q)^2
(\boldpi\cdot\boldpi)}{\gamma^2q_0^2}+\frac{(\boldpi\cdot\q)^2}{\nu^2 q_0^2}\\
&=1+2\frac{(\boldpi\cdot\q)^2}{\gamma^2q_0^2}\left(\frac{\boldpi\cdot\boldpi}{2}-\frac{\gamma q_0}{|\q|}\right)+
\frac{(-2E)(\boldpi\cdot \q)^2}{\gamma^2 q_0^2}\\
&=1
\end{align*}
\begin{align*}
\langle\beta,\beta\rangle&=1+\frac{(\boldpi\cdot\boldpi)^2|\q|^2}{\gamma^2q_0^2}-2\frac{(\boldpi\cdot\boldpi)|\q|}{\gamma q_0}+\frac{|\q|^2(\boldpi\cdot\boldpi)}{\nu^2q_0^2}\\
&=1+2\frac{|\q|^2(\boldpi\cdot\boldpi)}{\gamma^2q_0^2}\left(\frac{\boldpi\cdot\boldpi}{2}-\frac{\gamma q_0}{|\q|}+\frac{(-2E)}{2}\right)\\
&=1
\end{align*}

\begin{align*}
\langle\alpha,\beta\rangle=&-\frac{\boldpi\cdot\q}{\nu q_0}+\frac{|\q|}{\nu\gamma q_0^2}(\boldpi\cdot\q)(\boldpi\cdot\boldpi)
\\
&+\frac{\boldpi\cdot\q}{\nu q_0}-\frac{|\q|}{\nu\gamma q_0^2}(\boldpi\cdot\q)(\boldpi\cdot\boldpi)=0
\end{align*}

\end{proof}
The next result is a computation   we will use to understand the relationship between the  Kepler  vector field $X_H$  and the Delaunay vector field $X_{\mathcal H}$.
\begin{lemma}\label{lemma:Liederivatives}
 The derivatives of $\alpha$ and $\beta$ along the  flow generated by the vector field $X_H$ are:
 \[
 \frac{d\alpha}{dt}=\frac{\sqrt{-2E}}{q_0|\q|}\beta\quad , \quad   
 \frac{d\beta}{dt}=-\frac{\sqrt{-2E}}{q_0|\q|}\alpha
 \]
\end{lemma}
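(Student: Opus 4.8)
The plan is to verify the two identities by a direct componentwise computation, differentiating the explicit formulas \eqref{eq:alpha} and \eqref{eq:beta} along the Kepler flow and simplifying. The essential building block, already recorded in the proof of the Moser theorem in Section~\ref{sec:Moser}, is the clean expression
\[
\dot\boldpi=-\frac{\gamma\q}{|\q|^3},
\]
which, together with $\dot\q=\v$ and $\dot q_0=v_0$ (and $1-q_0^2=|\q|^2$, so that $(1-q_0^2)^{3/2}=|\q|^3$), gives the derivative of every scalar appearing in $\alpha$ and $\beta$: namely $\frac{d}{dt}|\q|=\frac{\q\cdot\v}{|\q|}$, $\frac{d}{dt}(\q\cdot\boldpi)=\v\cdot\boldpi-\frac{\gamma}{|\q|}$, and $\frac{d}{dt}(\boldpi\cdot\boldpi)=-\frac{2\gamma(\boldpi\cdot\q)}{|\q|^3}$. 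The whole lemma is then a matter of substituting these into the quotient and product rules and collecting terms.

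Before differentiating I would exploit the constraints $\langle q,q\rangle=1$ and $\langle q,v\rangle=0$ to reduce the scalar invariants to their simplest form. From $\q\cdot\v=-q_0v_0$ and $q_0^2+|\q|^2=1$ one obtains the three identities that make the computation collapse: $\q\cdot\boldpi=-v_0$, $\v\cdot\boldpi=q_0\langle v,v\rangle$, and consequently $\boldpi\cdot\boldpi=q_0^2\langle v,v\rangle+v_0^2$. I would also keep at hand the energy relation \eqref{eq:energy}, namely $\frac{\gamma q_0}{|\q|}=\frac12|\boldpi|^2-E$, which is precisely what converts the mixture of kinetic and potential terms produced by differentiation into the pure prefactor $\sqrt{-2E}$, using $\nu\sqrt{-2E}=\gamma$.

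With these in hand the verification amounts to four scalar computations, the $0$-components $\alpha_0,\beta_0$ and the $\RR^3$-parts $\boldalpha,\boldbeta$. As a representative case, applying the quotient rule to $\alpha_0=\frac{\q\cdot\boldpi}{\nu q_0}$ and substituting the derivatives above yields, after the constraint identities, $\dot\alpha_0=\frac{1}{\nu q_0^2}\bigl(\boldpi\cdot\boldpi-\frac{\gamma q_0}{|\q|}\bigr)$, which is exactly $\frac{\sqrt{-2E}}{q_0|\q|}\beta_0$ once rewritten via $\sqrt{-2E}=\gamma/\nu$; the components of $\beta$ follow the same pattern by symmetry. The main obstacle is purely one of bookkeeping and occurs in the vector parts $\dot{\boldalpha}$ and $\dot{\boldbeta}$: there $\dot\boldpi\propto\q$ together with the $|\q|$-derivative produces several cross terms proportional to $\q$ and to $(\q\cdot\boldpi)\boldpi$. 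One checks that the $\q$-terms cancel identically using $|\q|^2+q_0^2=1$, while the surviving $\boldpi$-terms combine, via the energy relation, into $\frac{-2E}{\gamma q_0^2}\boldpi=\frac{\sqrt{-2E}}{q_0|\q|}\boldbeta$. No idea beyond the identities listed above is needed, so the lemma is a direct if lengthy calculation.
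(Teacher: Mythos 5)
Your proposal is correct and follows essentially the same route as the paper's own proof: direct differentiation of the components of $\alpha$ and $\beta$ along the flow, using $\dot\boldpi=-\gamma\q/|\q|^3$, the constraints $\langle q,q\rangle=1$, $\langle q,v\rangle=0$, and the energy relation $\frac{\gamma q_0}{|\q|}=\frac12|\boldpi|^2-E$ (together with $\nu\sqrt{-2E}=\gamma$) to produce the prefactor $\frac{\sqrt{-2E}}{q_0|\q|}$. The only cosmetic difference is bookkeeping: you reduce scalars via the identities $\q\cdot\boldpi=-v_0$, $\v\cdot\boldpi=q_0\langle v,v\rangle$, whereas the paper eliminates $\v$ by writing $\v=\frac{1}{q_0}(\boldpi+v_0\q)$; both collapse the computation in the same way.
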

\begin{proof}
 Let $t\to (q(t),v(t))$ be an integral curve of the Kepler vector field $X_H$. Let $\alpha=\alpha(q(t),v(t))$ and 
 $\beta=\beta(q(t),v(t))$, where $\alpha=(\alpha_0,\boldalpha)$ and $\beta=(\beta_0,\boldbeta)$ are given by (\ref{eq:alpha})
 and (\ref{eq:beta}), respectively. Recall that $\pi=q_0\v-v_0\q$ so that $\displaystyle{\v=\frac{1}{q_0}(\boldpi+v_0\q)}$. Moreover a  simple computation, shows that $\displaystyle{\frac{d\boldpi}{dt}=-\frac{\gamma \q}{|\q|^3}}$. With these equations in mind, together with the expressions for $X_H$, we compute
\begin{align*}
 \frac{d\alpha_0}{dt}&=\frac{1}{\nu q_0}\left(\dot\boldpi\cdot\q+\boldpi\cdot \v\right)- \frac{1}{\nu q_0^2}(\boldpi \cdot \q)\dot q_0\\
 &=\frac{1}{\nu q_0}\left(-\frac{\gamma}{|\q|}+\frac{\boldpi\cdot\boldpi}{q_0}\right)+\frac{1}{\nu q_0^2}(\boldpi\cdot\q)v_0-\frac{1}{\nu q_0^2}(\boldpi\cdot\q)v_0\\
 &=\frac{1}{\nu q_0|\q|}\left(\frac{(\boldpi\cdot\boldpi)}{q_0\gamma}-1\right)=\frac{1}{\nu q_0|\q|}\beta_0
\end{align*}
and 
\begin{align*}
\frac{d\boldalpha}{dt}&=\frac{\v}{|\q|}-\frac{(\q\cdot \v)}{|\q|^3}\q-\frac{(\dot \boldpi\cdot \q)}{\gamma q_0}\boldpi
-\frac{(\boldpi\cdot \dot \q)}{\gamma q_0}\boldpi-\frac{(\boldpi\cdot \q)}{\gamma q_0}\dot\boldpi -\frac{(\boldpi\cdot \q)\boldpi}{\gamma q_0^2}v_0\\
&=-\frac{2\boldpi}{\gamma q_0^2}\left(\frac{\boldpi\cdot\boldpi}{2}-\frac{\gamma q_0}{|\q|}\right)\\
&=\frac{\sqrt{-2E}}{q_0|\q|}\left( \frac{|\q|}{\nu q_0}\boldpi   \right)= \frac{\sqrt{-2E}}{q_0|\q|}\boldbeta
\end{align*}

Similarly, to  verify the expression for $\displaystyle{\frac{d\beta}{dt}}$ we compute 
\begin{align*}
 \frac{d\beta_0}{dt}&=2\frac{(\boldpi\cdot\dot \boldpi)|\q|}{\gamma q_0}-\frac{(\boldpi\cdot\boldpi)|\q|}{\gamma q_0^2}v_0
 +\frac{(\boldpi\cdot\boldpi)}{q_0\gamma}\frac{(\q\cdot\v)}{|\q|}\\
 &= -2\frac{(\boldpi\cdot\q)}{q_0|\q|^2}-\frac{(\boldpi\cdot\boldpi)}{\gamma q_0^2}|\q|v_0+\frac{(\boldpi\cdot\boldpi)}{\gamma q_0^2|\q|}(\q\cdot\boldpi)+\frac{(\boldpi\cdot\boldpi)}{q_0^2\gamma|\q|}(\q\cdot\q)\\
 &=2\frac{\boldpi\cdot \q}{q_0^2|\q|\gamma}\left(\frac{\boldpi\cdot\boldpi}{2}-\frac{\gamma q_0}{|\q|}\right)=-\frac{\sqrt{-2E}}{q_0|\q|}\left( \frac{\sqrt{-2E}(\boldpi\cdot\q)}{\gamma q_0}\right)\\
 &=-\frac{\sqrt{-2E}}{q_0|\q|}\alpha_0
 \end{align*}
 and
 \begin{align*}
 \frac{d\boldbeta}{dt}&=\frac{|\q|\dot\boldpi}{\nu q_0}-\frac{|\q|\boldpi}{\nu q_0^2}v_0
 +\frac{(\q\cdot\v)\boldpi}{\nu q_0|\q|}\\
 &=\sqrt{-2E}\left( \frac{|\q|\dot\boldpi}{\gamma q_0}-\frac{|\q|\boldpi}{\gamma q_0^2}v_0
 +\frac{(\q\cdot\boldpi)\boldpi}{\gamma q_0^2|\q|}+\frac{|\q|^2\boldpi}{\gamma q_0^2|\q|}v_0\right)\\
 &=-\frac{\sqrt{-2E}}{q_0|\q|}\left(\frac{\q}{|\q|}-\frac{(\q\cdot\boldpi)\boldpi}{\gamma q_0}\right)
 =-\frac{\sqrt{-2E}}{q_0|\q|}\boldalpha
 \end{align*}
\end{proof}

We recall some standard terminology that we  will use to describe the relation between the Kepler  vector field  and the Delaunay vector field.

\begin{definition}
Let $X$ and $Y$ be vector fields on the manifolds $M$ and $N$, and let  $g^X_t:M\to M$ and $g^Y_t:N\to N$ be the corresponding flows.
 We say that $X$ and $Y$ (and the corresponding flows) are  {\itshape $C^k$-conjugate} 
if there is a $C^k$-diffeomorphism $\Phi:M\to N$ such that 
\[
\Phi(g^X_t(x))=g^Y_{t}(\Phi(x))
\]
We say that $X$ and $Y$ (and the corresponding flows) are  {\itshape $C^k$-equivalent} 
if there is a $C^k$-diffeomorphism $\Phi:M\to N$ that maps the orbits of $g^X$ onto the orbits of $g^Y$ and preserves 
the direction of time.  That is, there is a family of monotone increasing diffeomorphisms $\tau_x:\R\to \R$ such that
\[
\Phi(g^X_t(x))=g^Y_{\tau_x(t)}(\Phi(x))
\]
\end{definition}
\begin{remark}
 
Note that differentiating $\Phi(g^X_t(x))=g^Y_{\tau(x,t)}(\Phi(x))$ with respect to $t$ yields
\beq\label{eq:equivalence}
\Phi_*X(x)=\left(\frac{\partial \tau_x}{\partial t}\right) Y(\Phi(x))
\eeq
Hence, if this equation is satisfied, under the conditions of the definition above, the two vector fields are $C^k$ equivalent.
\end{remark}
The next theorem finally describes the relationship between the Kepler and the Delaunay vector fields. 
\begin{theorem}
The Kepler  vector field $X_H$  and the Delaunay vector field $X_{\mathcal H}$ are  $C^\infty$-equivalent with maps 
\[\Phi(q,v)=(\alpha\sin\varphi+\beta\cos\varphi,\nu(-\alpha\cos\varphi+\beta\sin\varphi))
\]
and  
\[\tau_q=\int_{t_0}^t \frac{dt}{q_0^2(t)}\]
where $\displaystyle{\varphi=\frac{1}{\nu q_0}\q\cdot\boldpi (= \alpha_0)}$.
\end{theorem}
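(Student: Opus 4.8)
The plan is to reduce the statement to the infinitesimal criterion \eqref{eq:equivalence} from the Remark: since the previous theorem shows that $\Phi$ is smooth and, on the region $E<0$, a diffeomorphism onto its image, it suffices to produce a positive function $\partial\tau_q/\partial t$ with $\Phi_*X_H=(\partial\tau_q/\partial t)\,X_{\mathcal H}$ and then to observe that this function integrates to a monotone increasing $\tau_q$. The entire argument is therefore one differentiation of $\Phi$ along $X_H$, followed by a comparison with the Delaunay field \eqref{eq:Delaunay_vectorfield}.

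First I would take an integral curve $t\mapsto(q(t),v(t))$ of $X_H$ and differentiate $(x,y)=\Phi(q,v)$ as given in \eqref{eq:Phi}. Abbreviating $\lambda=\sqrt{-2E}/(q_0|\q|)$, Lemma \ref{lemma:Liederivatives} gives $\dot\alpha=\lambda\beta$ and $\dot\beta=-\lambda\alpha$, while $\varphi=\alpha_0$ forces $\dot\varphi=\lambda\beta_0$. The key structural point is that $\Phi$ sends the moving frame $(\alpha,\beta)$ to $(x,y/\nu)$ by a rotation through $\varphi$, so the contributions of $\dot\alpha,\dot\beta$ and of $\dot\varphi$ live in the same plane and the $\sin\varphi,\cos\varphi$ terms regroup. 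A short calculation collapses the derivative to
\begin{equation*}
\frac{dx}{dt}=(\lambda-\dot\varphi)\frac{y}{\nu},\qquad \frac{dy}{dt}=-\nu(\lambda-\dot\varphi)\,x,
\end{equation*}
so that a single scalar $\kappa:=\lambda-\dot\varphi=\lambda(1-\beta_0)$ governs the entire velocity.

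Next I would evaluate $X_{\mathcal H}$ at the image point. Using the orthonormality $\langle\alpha,\alpha\rangle=\langle\beta,\beta\rangle=1$ and $\langle\alpha,\beta\rangle=0$ established in the proof of Corollary \ref{co:E}, one obtains $\langle y,y\rangle=\nu^2$, a constant; hence $X_{\mathcal H}(x,y)=\bigl(\tfrac{\gamma^2}{\nu^4}y,\,-\tfrac{\gamma^2}{\nu^2}x\bigr)$ along the image curve. It is precisely the constancy of $\langle y,y\rangle$ that allows the $x$- and $y$-components to be matched by a \emph{single} common factor: comparing $X_{\mathcal H}(x,y)$ with the two displayed derivatives forces $\partial\tau_q/\partial t=\nu^3\kappa/\gamma^2$ from either component.

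The only genuine computation, and the step I expect to be the main obstacle, is showing that this common factor equals exactly $1/q_0^2$. Here I would substitute $\beta_0=(|\q|/\gamma q_0)\,\boldpi\cdot\boldpi-1$ from \eqref{eq:beta} and invoke the energy relation \eqref{eq:energy} in the form $\gamma q_0/|\q|=\tfrac12\boldpi\cdot\boldpi-E$; this makes the $\boldpi\cdot\boldpi$-dependence cancel and gives $1-\beta_0=(-2E)|\q|/(\gamma q_0)$. Feeding this into $\kappa=\lambda(1-\beta_0)$ and simplifying with $\nu=\gamma/\sqrt{-2E}$ reduces $\nu^3\kappa/\gamma^2$ to $1/q_0^2$, which is $\partial\tau_q/\partial t$ for the stated $\tau_q$. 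The delicate part is bookkeeping the powers of $-2E$ and $\gamma$ so that both components produce the same factor; the energy identity is what removes the velocity-dependence and leaves the clean $1/q_0^2$. Finally, because $E<0$ confines the orbit to the upper hemisphere where $q_0>0$, the factor $1/q_0^2$ is strictly positive, so $\tau_q=\int_{t_0}^t dt/q_0^2(t)$ is a monotone increasing diffeomorphism of $\R$; together with the infinitesimal identity, \eqref{eq:equivalence} then yields the asserted $C^\infty$-equivalence.
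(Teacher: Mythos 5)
Your proposal is correct and follows essentially the same route as the paper's proof: differentiate $\Phi$ along an integral curve of $X_H$ using Lemma \ref{lemma:Liederivatives}, regroup the $\sin\varphi,\cos\varphi$ terms so that a single scalar multiplies $(y/\nu,-\nu x)$, use the orthonormality relations behind Corollary \ref{co:E} to get $\langle y,y\rangle=\nu^2$, show the scalar equals $1/q_0^2$, and conclude via \eqref{eq:equivalence}. The only (cosmetic) difference is that you obtain $\dot\varphi=\lambda\beta_0$ from the first component of Lemma \ref{lemma:Liederivatives} and then simplify $1-\beta_0$ with the energy identity, whereas the paper differentiates $\varphi=\frac{1}{\nu q_0}\q\cdot\boldpi$ directly; both computations rest on the same relation $\boldpi\cdot\boldpi-2\gamma q_0/|\q|=2E$ and yield the same factor.
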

\begin{proof}
 Differentiating 
 \[
 (x(t),y(t))=\Phi(q(t),v(t))=\left(\alpha\sin\varphi+\beta\cos\varphi,\nu(-\alpha\cos\varphi+\beta\sin\varphi)\right)
 \]
with respect to $t$ along a trajectory $(q(t),v(t))$ of $X_H$ and using Lemma \ref{lemma:Liederivatives} yields
\begin{align*}
 \frac{dx}{dt}&= \frac{d\alpha}{dt}\sin\varphi+\alpha\cos\varphi\frac{d\varphi}{dt}+\frac{d\beta}{dt}\cos\varphi-\beta\sin\varphi\frac{d\theta}{dt}\\
&=\left(\frac{\sqrt{-2E}}{q_0|\q|}-\frac{d\varphi}{dt}\right)(\beta\sin\varphi-\alpha\cos\varphi)=
\frac{\sqrt{-2E}}{\gamma}\left(\frac{\sqrt{-2E}}{q_0|\q|}-\frac{d\varphi}{dt}\right)y\\
&=\frac{1}{\nu}\left(\frac{\gamma}{\nu q_0|\q|}-\frac{d\varphi}{dt}\right)y
\end{align*}
and
\begin{align*}
 \frac{dy}{dt}&=\nu \left(-\frac{d\alpha}{dt}\cos\varphi+\alpha\sin\varphi\frac{d\theta}{dt}+\frac{d\beta}{dt}\sin\varphi +\beta\cos\theta\frac{d\theta}{dt}\right)=\\
&=-\nu\left(\frac{\sqrt{-2E}}{q_0|\q|}-\frac{d\varphi}{dt}\right)(\alpha\sin\varphi+\beta\cos\varphi)\\
&=-\nu\left(\frac{\gamma}{\nu q_0|\q|}-\frac{d\varphi}{dt}\right)x
 \end{align*}
Taking $\displaystyle{\varphi=\frac{1}{\nu q_0}(\q\cdot \boldpi)}$ it implies that
\[\begin{split}
\frac{d\varphi}{dt}-\frac{\gamma}{\nu q_0|\q|}&=\frac 1 \nu\left(\frac{\v\cdot\boldpi}{q_0}+ \frac{\q\cdot\dot \boldpi}{q_0}-\frac{\q\cdot\boldpi}{q_0^2}\dot q_0\right) -\frac{\gamma}{\nu q_0|\q|}\\
&=\frac 1 \nu\left(\frac{\boldpi\cdot\boldpi}{q_0^2}+v_0\frac{\q\cdot \boldpi}{q_0^2} -\frac{\gamma}{q_0|\q|}-v_0\frac{\q\cdot \boldpi}{q_0^2}\right)-\frac{\gamma}{\nu q_0|\q|}\\
&=\frac{1}{\nu q_0^2}\left(\boldpi\cdot\boldpi-2\frac{\gamma q_0}{|\q|}\right)=-\frac{\gamma^2}{\nu^3 q_0^2}
\end{split}\]
and thus by Corollary \ref{co:E} we obtain
\[
\Phi_*(X_H)=\begin{pmatrix}
       \frac{\gamma^2}{q_0^2\langle y,y\rangle^2}y\\
       -\frac{\gamma^2}{q_0^2\langle y,y\rangle}x
\end{pmatrix}=\frac{1}{q_0^2}X_{\mathcal H}
\]


Let 
 \[\tau_q=\int_{t_0}^t \frac{dt}{q_0^2(t)}\] 
then  \eqref{eq:equivalence} holds and hence the two vector field are smoothly equivalent. 
\end{proof}

 Note that the vector field of  the Kepler problem in $\R^3$ is $C^k$-conjugate to the one of the Delaunay vector fields
(\cite{CD,Cushman}). And the 
diffeomorphism $\Phi_c$ intertwining the two vector fields is  a symplectomorphism.  
However,  the vector field $X_H$ of  the  Kepler problem in $S^3$ is $C^k$-equivalent but not  $C^k$-conjugate  to the Delaunay vector field $X_{\mathcal H}$. 


\section{Gnomonic Transformation}\label{sec:gnomonic_transformation}
The computations in the previous section are analogous to that of \cite{CD,Cushman}.
 However, there is a more direct link between the Kepler problem on $S^3$ and in Euclidean space.
 We first recall a classical result due to Appell and Serret \cite{Appell, Serret}, relating the Kepler problem on the upper 
hemisphere $S^3_+$  to that on $\RR^3$ via the gnomonic transformation.

%
%
On the phase space $T_0\R^3=(\R^3-\{0\})\times \R^3$ with coordinates $(\Q,\V)$ and symplectic form
 $\displaystyle{\omega_3=\sum_{i=1}^3 dQ_i\wedge dV_i}$ consider the 
Kepler Hamiltonian
\[
H_K(\Q,\V)=\frac 1 2 \V\cdot \V-\frac{\gamma}{|\Q|}
\]
where $\cdot$ is the Euclidean inner product on $\R^3$ and $|\Q|$ is the length of the vector $\Q$. The integral curves of the 
Hamiltonian vector field $X_{H_K}$ on $T_0\R^3$ satisfy the equations
\beq\label{eq:Kepler}
\begin{split}
& \dot \Q=\V\\
&\dot\V=-\gamma\frac{\Q}{|\Q|^3}
\end{split}
\eeq
which (for $\gamma>0$) describes the motion of a particle of mass $1$ about the origin under the influence of the Newtonian gravity. The momentum map of this system is 
\[J_K=\left(\Q\times \V,-\frac{\gamma}{\sqrt{-2H_K}}\left(\frac{1}{\gamma}\V\times(\Q\times \V)-\frac{\Q}{|\Q|}\right)\right)\]
This is a classical system and a detailed analysis can be found in \cite{Cushman}, or in any good mechanics book. 

As before, $S^3$ is the unit sphere in $\RR^4$. We now consider the gnomonic projection, which is the  projection  onto the tangent plane at the north pole from the center of the sphere: 
\[
q = (q_0, \q) \mapsto \Q :=\frac{\q}{\sqrt{1-|\q|^2}} = \frac{\q}{q_0}
\]
The induced map on the tangent space at $\q$ is given by
$$v = (v_0, \v) \mapsto \frac{\v}{\sqrt{1-|\q|^2}} + \frac{(\q \cdot \v)\q}{\left(1-|\q|^2\right)^{\frac{3}{2}}} = \frac{1}{q_0}\v + \frac{\q\cdot \v}{q_0^3}\q = \frac{1}{q_0}\v - \frac{v_0}{q_0^2} \q = \frac{1}{q_0^2}\boldpi$$

\begin{theorem}\label{th:gnomonic}
The Kepler  vector field $X_H$ on $S^3$ (restricted to the upper hemisphere) and the Kepler  vector field $X_{H_K}$ on $\R^3$ are  $C^\infty$-equivalent with maps 
\[\Psi : (q,v) \mapsto (\Q,\V)=\left(\frac{\q}{q_0}, \boldpi \right)
\]
and  
\[\tau_q=\int_{t_0}^t \frac{dt}{q_0^2(t)}\]
The map $\Psi$ together with $t \mapsto \tau_q$ is called the \emph{gnomonic transformation}.

\end{theorem}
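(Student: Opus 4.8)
The plan is to establish the $C^\infty$-equivalence through the infinitesimal criterion \eqref{eq:equivalence}. Since the proposed reparametrization gives $\displaystyle \frac{\partial \tau_q}{\partial t} = \frac{1}{q_0^2}$, it suffices to check that $\Psi$ is a $C^\infty$-diffeomorphism onto $T_0\R^3$ and that
\[
\Psi_* X_H = \frac{1}{q_0^2}\, X_{H_K},
\]
the common positive factor $1/q_0^2$ then supplying the monotone increasing $\tau_q$. I would work on the open set $\{(q,v)\in TS^3 : q_0>0,\ \q\neq 0\}$, so that $q_0>0$ throughout (this is the hypothesis ``restricted to the upper hemisphere'') and the attracting pole $\q=0$ — which corresponds to the collision point $\Q=0$ excluded from $T_0\R^3$ — is removed.

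First I would verify that $\Psi$ is a diffeomorphism. On the base, the gnomonic projection $\q\mapsto \q/q_0=\q/\sqrt{1-|\q|^2}$ carries the open upper hemisphere onto $\R^3$, with smooth inverse recovered from $q_0=1/\sqrt{1+|\Q|^2}$ and $\q=q_0\Q$. On the fibre, given $\V=\boldpi=q_0\v-v_0\q$ together with the constraint $\langle q,v\rangle=0$ (equivalently $v_0=-(\q\cdot\v)/q_0$), one solves a linear system to obtain $\v=\frac{1}{q_0}(\boldpi-(\boldpi\cdot\q)\q)$ and $v_0=-\boldpi\cdot\q$, which is smooth for $q_0>0$. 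It is worth noting here that $\Psi$ uses $\V=\boldpi$ rather than the natural tangent lift $\frac{1}{q_0^2}\boldpi$ computed just above the statement; the fibrewise factor $q_0^2$ is harmless for invertibility but crucial below.

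The core is the computation of $\Psi_* X_H$, obtained by differentiating $(\Q(t),\V(t))=\Psi(q(t),v(t))$ along an integral curve of $X_H$. Using $\dot\q=\v$ and $\dot q_0=v_0$ one gets at once
\[
\frac{d\Q}{dt}=\frac{q_0\v-v_0\q}{q_0^2}=\frac{\boldpi}{q_0^2}=\frac{1}{q_0^2}\V,
\]
which is $1/q_0^2$ times the first Kepler equation $\dot\Q=\V$ of \eqref{eq:Kepler}. For the fibre I would invoke the identity $\dot\boldpi=-\gamma\q/|\q|^3$ already established earlier (it appears in the Moser section and again in Lemma \ref{lemma:Liederivatives}), giving $\frac{d\V}{dt}=-\gamma\q/|\q|^3$; substituting $\q=q_0\Q$ and $|\q|=q_0|\Q|$ (valid since $q_0>0$) turns this into $\frac{1}{q_0^2}(-\gamma\Q/|\Q|^3)$, precisely $1/q_0^2$ times the second Kepler equation.

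Both components thus acquire the same factor $1/q_0^2=\partial\tau_q/\partial t>0$, so \eqref{eq:equivalence} holds and the two fields are $C^\infty$-equivalent with the stated $\Psi$ and $\tau_q$. I expect no hard estimate: the only delicate points are bookkeeping, namely confirming that $\Psi$ is a genuine diffeomorphism on the punctured upper hemisphere, and recognizing that the fibrewise rescaling by $q_0^2$ built into $\Psi$ is exactly what forces both equations of motion to scale by the single common factor $1/q_0^2$. Had one used the unrescaled tangent lift $\frac{1}{q_0^2}\boldpi$, the two components would scale differently and no uniform reparametrization $\tau_q$ could exist; this is the conceptual crux of the argument.
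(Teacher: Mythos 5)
Your proposal is correct and follows essentially the same route as the paper: differentiate $(\Q,\V)=\Psi(q(t),v(t))$ along an integral curve of $X_H$, conclude $\Psi_*X_H=\frac{1}{q_0^2}X_{H_K}$, and invoke \eqref{eq:equivalence} with $\tau_q=\int_{t_0}^t \frac{dt}{q_0^2(t)}$ — indeed, the paper's expansion of $\frac{d\V}{dt}$ is exactly a re-derivation of the identity $\dot{\boldpi}=-\gamma\q/|\q|^3$ that you cite from the Moser section. Your explicit check that $\Psi$ is a diffeomorphism from the punctured upper-hemisphere bundle onto $T_0\RR^3$ (with inverse $v_0=-\boldpi\cdot\q$, $\v=\frac{1}{q_0}(\boldpi-(\boldpi\cdot\q)\q)$) is a small supplement the paper leaves implicit, but it does not change the method.
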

\begin{proof}
 Differentiating $(\Q(t),\V(t))=\Psi(\q(t),\v(t))$ with respect to $t$ along a trajectory $(\q(t),\v(t))$ of $X_H$ 
and using Lemma \ref{lemma:Liederivatives} yields
\[
\frac{d\Q}{dt}=\frac{1}{q_0}\v+\frac{\q\cdot\v}{q_0^3}\q=\frac{1}{q_0^2}\V
\]
and
\[\begin{split}
\frac{d\V}{dt}&=q_0\frac{d\v}{dt}
+\frac{|\v|^2}{q_0}\q+\frac{(\q\cdot \frac{d\v}{dt})}{q_0}\q+\frac{(\q\cdot\v)^2}{q_0^3}\q\\
&=-\left(|\v|^2+\frac{(\q\cdot\v)^2}{q_0^2}+\gamma\frac{q_0}{(1-q_0^2)^{3/2}}\right)(q_0^2+(\q\cdot\q))\frac{\q}{q_0}+\left(|\v|^2+\frac{(\q\cdot\v)^2}{q_0^2}\right)\frac{\q}{q_0}\\
&=-\gamma\frac{\q}{|\q|^3}=-\gamma\frac{\Q}{|\Q|^3}(1+|\Q|^2)=-\frac{\gamma}{q_0}\frac{\Q}{|\Q|^3}
\end{split}
\]
where we used the fact that $\displaystyle{q_0^2v_0^2=(\q\cdot\v)^2}$.
Consequently we obtain
\[
\Psi_*(X_{H})=\begin{pmatrix}
\frac{\V}{q_0^2}\\
-\gamma\frac{\Q}{q_0^2|\Q|^3}
\end{pmatrix}=\frac{1}{q_0^2}X_{ H_K}
\]
Let 
 \[\tau_q=\int_{t_0}^t \frac{dt}{q_0^2(t)}\] 
then  \eqref{eq:equivalence} holds and hence the two vector field are smoothly equivalent. 
\end{proof}

 The map $\Psi$ of the theorem above has properties that are analogous to the map $\Phi$. In fact it intertwines $E$ and the Kepler  Hamiltonian in Euclidean space, and it also  intertwines  the momentum map $J_K$ of the Kepler problem in Euclidean space  with the map $J$.
We prove these properties below.

\begin{proposition}
Consider the  smooth map $\Psi:(\q,\v)\to(\Q,\V)$ 
\begin{enumerate}
\item  $\Psi$ intertwines $E$ and the Kepler  Hamiltonian in Euclidean space, that, is, $\Psi^*{H_K}=E$.
\item  $\Psi$ intertwines the momentum map ${ J_K}$ and the map $J$, that is  $\Psi^*{J_K}=J$.
\end{enumerate}
\end{proposition}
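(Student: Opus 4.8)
The plan is to verify both identities by direct substitution, exploiting the explicit form $\Psi(q,v)=(\Q,\V)=\left(\frac{\q}{q_0},\boldpi\right)$, the relation $|\q|=(1-q_0^2)^{1/2}$ valid on $S^3$, the fact that $q_0>0$ on the upper hemisphere, and a single elementary cross-product identity. For part (1) I would simply compute $H_K\circ\Psi$. Since $\V=\boldpi$ gives $\V\cdot\V=|\boldpi|^2$, and $|\Q|=|\q|/q_0$ on the upper hemisphere, the potential term becomes $-\gamma/|\Q|=-\gamma q_0/|\q|$. Recognizing that $-\gamma q_0/|\q|=-\gamma q_0/(1-q_0^2)^{1/2}=V(q)$, we obtain $H_K\circ\Psi=\frac 1 2|\boldpi|^2+V(q)=E$ directly from the definition $E=\frac 1 2|\boldpi|^2+V(q)$.

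For part (2) I would first establish the auxiliary identity $\q\times\boldpi=q_0\boldmu$. This is immediate from $\boldpi=q_0\v-v_0\q$ together with $\q\times\q=0$, which leaves $\q\times\boldpi=q_0(\q\times\v)=q_0\boldmu$. With this in hand the first component of $J_K$ pulls back correctly, since
$$\Q\times\V=\frac{\q}{q_0}\times\boldpi=\frac{1}{q_0}(\q\times\boldpi)=\boldmu,$$
which matches the first component $\boldmu$ of $J$. For the second component I would substitute $\V\times(\Q\times\V)=\boldpi\times\boldmu$ (using $\Q\times\V=\boldmu$) and $\Q/|\Q|=\q/|\q|$ into the eccentricity part of $J_K$, obtaining
$$-\frac{\gamma}{\sqrt{-2H_K}}\left(\frac{1}{\gamma}\boldpi\times\boldmu-\frac{\q}{|\q|}\right).$$

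At this point I would invoke part (1) to replace $H_K$ by $E$, so that $\sqrt{-2H_K}=\sqrt{-2E}$, whereupon the expression collapses to $-\frac{1}{\sqrt{-2E}}\left(\boldpi\times\boldmu-\gamma\frac{\q}{|\q|}\right)=-\frac{\A}{\sqrt{-2E}}=\widetilde\e$, using the definitions $\A=\boldpi\times\boldmu-\gamma\frac{\q}{|\q|}$ and $\widetilde\e=-\nu\e=-\A/\sqrt{-2E}$. Since each step is either a substitution or an elementary manipulation of the triple product, there is no genuine obstacle; the only point demanding care is this conversion of the normalization factor $\gamma/\sqrt{-2H_K}$ into $\gamma/\sqrt{-2E}$, which is precisely where part (1) enters. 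For this reason the two parts are logically linked and I would prove them in this order, being careful throughout to note that the identities $|\Q|=|\q|/q_0$ and $\Q/|\Q|=\q/|\q|$ require $q_0>0$, consistent with the restriction to the upper hemisphere in Theorem \ref{th:gnomonic}.
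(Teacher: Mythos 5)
Your proposal is correct and follows essentially the same route as the paper's proof: part (1) by direct substitution using $\V=\boldpi$ and $|\Q|=|\q|/q_0$ (so that $-\gamma/|\Q|=V(q)$), and part (2) by computing $\Q\times\V=\boldmu$ and the eccentricity term, then invoking part (1) to convert the factor $\gamma/\sqrt{-2H_K}$ into $\gamma/\sqrt{-2E}=\nu$ so the second component becomes $\widetilde\e$. The only cosmetic difference is that the paper expands $\boldpi\cdot\boldpi$ in components and writes the cross-product identity in the form $\q\times\bigl(\v+\tfrac{(\q\cdot\v)}{1-|\q|^2}\q\bigr)=\q\times\v$, whereas you isolate it as the cleaner auxiliary identity $\q\times\boldpi=q_0\boldmu$.
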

\begin{proof}
To prove the first part of the proposition we compute  $\Psi^*{H_K}$.
A simple computation yields
\[\begin{split}
(\Psi^*{H_K})(q,v)&=\frac 1 2\V\cdot\V-\frac{\gamma}{|\Q|}=\frac 1 2(q_0^2|\v|^2+2q_0^2v_0^2+|\q|^2v_0^2)-\frac{\gamma q_0}{\sqrt{1-q_0^2}}\\
&=\frac 1 2\boldpi\cdot\boldpi - \frac{\gamma q_0}{\sqrt{1-q_0^2}}=E(q,v)
\end{split}
\]
since
\[
\V=q_0\v+\frac{\q\cdot\v}{q_0}\q=q_0\v-\frac{q_0v_0}{q_0}\q=\boldpi
\]
and
\[
\frac{\gamma}{|\Q|}=\gamma\frac{q_0}{|\q|}=\gamma\frac{q_0}{\sqrt{1-q_0^2}}
\]
We prove the second part by computing $\Psi^*{J_K}$.
A computation gives
\[
 (\Psi^*J_K)(q,v)=\left(\Q\times \V,-\frac{\gamma}{\sqrt{-2H_K}}\left(\frac{1}{\gamma}\V\times(\Q\times \V)-\frac{\Q}{|\Q|}\right)\right)=(\boldmu,\widetilde\e)=J(q,v)
 \]
 since
 \[
 \Q\times\V=\q\times\left(\v+\frac{(\q\times \v)}{1-|\q|^2}\q\right)=\q\times\v=\boldmu
 \]
 \[
 -\frac{\Q}{|\Q|}+\frac{1}{\gamma}\V\times(\Q\times\V)=-\frac{\q}{\sqrt{1-q_0^2}}+\frac 1 \gamma \boldpi\times\boldmu=\e
 \]
 and $\Psi^*{H_K}=E$ by part (1) of the proposition. 
\end{proof}

Recall that if $(M,\omega_M)$ and $(N, \omega_N)$ are symplectic manifolds and $f:M\to N$ is a diffeomorphism then $f$ is symplectic 
if and only if for all $h$, 
\beq\label{eq:symplecticmap}
f^*X_h=X_{h\circ f}
\eeq
see \cite{Abraham} for a proof.
With this in mind we show that $\Psi$ is not symplectic.
\begin{proposition}\label{prop:notsymplectic}
The map $\Psi$ is not symplectic. 
\end{proposition}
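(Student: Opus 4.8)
The plan is to detect the failure of symplecticity through the Poisson bracket. By the characterization \eqref{eq:symplecticmap}, a diffeomorphism is symplectic precisely when it preserves Hamiltonian vector fields, equivalently when it pulls back the canonical bracket on $T_0\RR^3$ to the Dirac-Poisson bracket $\{\cdot,\cdot\}^*$ on $TS^3$; so it suffices to exhibit a single pair of functions whose brackets disagree. Note first that the position coordinates pull back as $Q_i\circ\Psi=q_i/q_0$, which depend only on the mutually commuting coordinates $q_\alpha$; hence $\{Q_i\circ\Psi,Q_j\circ\Psi\}^*=0$ automatically, and no obstruction can arise there. The obstruction must therefore be sought among the velocities. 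Since $V_i\circ\Psi=\pi_i$, where $\boldpi=q_0\v-v_0\q$, and since the canonical velocities satisfy $\{V_i,V_j\}_{\RR^3}=0$, symplecticity of $\Psi$ would force $\{\pi_i,\pi_j\}^*=0$ for all $i,j$.

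The heart of the argument is then to compute $\{\pi_i,\pi_j\}^*$ directly from the Dirac-Poisson table of Lemma \ref{lemma:diracpoisson}. Writing $\pi_i=q_0v_i-v_0q_i$ and expanding by bilinearity and the Leibniz rule produces four terms, each reduced through the entries $\{q_\alpha,v_\beta\}^*=\delta_{\alpha\beta}-q_\alpha q_\beta$ and $\{v_\alpha,v_\beta\}^*=q_\beta v_\alpha-q_\alpha v_\beta$. I expect the diagonal terms $\{q_0v_i,q_0v_j\}^*$ and $\{v_0q_i,v_0q_j\}^*$ to vanish identically after internal cancellation, while the two cross terms combine to give the angular-momentum component, namely
\[
\{\pi_i,\pi_j\}^*=q_iv_j-q_jv_i=\epsilon_{ijk}\mu_k .
\]

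Finally I would conclude: since $\{\pi_i,\pi_j\}^*=\epsilon_{ijk}\mu_k$ is nonzero wherever the angular momentum $\boldmu=\q\times\v$ does not vanish, the relation $\{V_i,V_j\}_{\RR^3}=0$ is not preserved under pullback. Thus $\Psi$ fails to preserve the Poisson bracket on an open dense subset of $TS^3$, and therefore is not symplectic. The only genuine work lies in the bracket computation of the middle step; it is mechanical but slightly lengthy, and the point to watch is the internal cancellation that annihilates the quartic terms in $q$ and $v$ and leaves exactly the linear combination $\epsilon_{ijk}\mu_k$.
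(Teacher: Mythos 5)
Your proof is correct, but it takes a genuinely different route from the paper. The paper uses the vector-field characterization \eqref{eq:symplecticmap} with the specific Hamiltonian $h=H_K$: it computes $X_{H_K}(\Psi(q,v))$ directly, computes $\Psi_*X_{E}$ (using $H_K\circ\Psi=E$ and the Dirac bracket to write $X_E$ as $X_H$ minus an angular-momentum correction), and compares the $\Q$-components to reach the contradiction $1=\tfrac{1}{2}$. You instead use the equivalent bracket-preservation characterization on coordinate functions: since $V_i\circ\Psi=\pi_i$ and the canonical brackets $\{V_i,V_j\}$ vanish, symplecticity would force $\{\pi_i,\pi_j\}^*|_{TS^3}=0$, whereas the Dirac table gives
\[
\{\pi_i,\pi_j\}^*|_{TS^3}=q_iv_j-q_jv_i=\epsilon_{ijk}\mu_k,
\]
which is nonzero wherever $\boldmu\neq 0$. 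Your key computation checks out (the two ``diagonal'' terms do cancel internally, and the cross terms leave exactly $q_iv_j-q_jv_i$); in fact this identity is already recorded in the paper's Appendix in the proof of Lemma \ref{lem:muAbrackets}, so your argument needs no computation the paper does not already contain. What your route buys is brevity and a transparent geometric reason for the failure: the pulled-back momentum variables $\boldpi$ do not commute, and the obstruction is precisely the angular momentum. What the paper's route buys is an explicit pointwise violation of \eqref{eq:symplecticmap} for the physically relevant Hamiltonian $H_K$, which ties directly into its surrounding discussion of why $X_H$ is only $C^\infty$-equivalent, not conjugate, to the Euclidean Kepler and Delaunay fields. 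One small point: your argument (like the paper's) implicitly uses that the Dirac bracket restricted to $TS^3$ is the Poisson bracket of the induced symplectic form $\omega|_{TS^3}$ — the standard property of Dirac brackets for second-class constraints — which is exactly the framework the paper sets up in Section 2, so this is legitimate.
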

\begin{proof}
To show that the map is not symplectic it is enough to find an Hamiltonian for which  \eqref{eq:symplecticmap} is not satisfied.
Let $h=H_K$, and let $f=\Psi$, then it can be shown that  \eqref{eq:symplecticmap} is equivalent to writing
\[
X_{H_K}(\Psi(q,v))=\Psi_*X_{H_K\circ\Psi}(q,v)=\Psi_*X_{E}(q,v)
\]
The left-hand side of the equation is 
\beq
(X_{H_K}(\Psi(q,v))=\left(
\begin{array}{c}
\V\\
-\gamma\frac{\Q}{|\Q|^3}
\end{array}\right)=
\left(
\begin{array}{c}
q_0\v+(\q\cdot\v)q_0^{-1}\q\\
-\gamma\frac{\q q_0^2}{|\q|^3}
\end{array}\right)
\eeq

To compute the right hand side note that $\{\mu_i,q_j\}^*|_{TS^3}=\epsilon_{ijk}q_k$ and $\quad \{\mu_i,v_j\}^*|_{TS^3}=\epsilon_{ijk}v_k$ (see the Appendix), and thus $\{\q,|\boldmu|^2\}^*|_{TS^3}=\boldmu\times \q$ and 
$\{\v,|\boldmu|^2\}^*|_{TS^3}=\boldmu\times \v$. Consequently
\[
X_E^{(\q,\v)}=\left(
\begin{array}{c}
\{\q,H-|\boldmu|^2/2\}^*|_{TS^3}\\
\{\v,H-|\boldmu|^2/2\}^*|_{TS^3}
\end{array}\right)=
X_H^{(\q,\v)}-\frac 1 2\left(
\begin{array}{c}
\boldmu\times \q\\
\boldmu\times \v
\end{array}\right)
\]
where $X_E^{(\q,\v)}$ denotes the part of the vector field corresponding to $\displaystyle{\left.\left\{\q,H-\frac{1}{2}|\boldmu|^2\right\}^*\right|_{TS^3}}$ and
$\displaystyle{\left.\left\{\v,H-\frac{1}{2}|\boldmu|^2\right\}^*\right|_{TS^3}}$.
Differentiating $\Q(t)$ with respect to $t$ along a trajectory of $X_E$  yields
\[
\frac{d\Q}{dt}=\frac{\v-\frac 1 2\boldmu\times \q}{\sqrt{1-|\q|^2}}+\frac{\q[\q\cdot(\v-\frac 1 2\boldmu\times \q)]}{(1-|\q|^2)^{3/2}}
=\frac{1}{q_0^2}(q_0\v+(\q\cdot\v)q_0^{-1}\q)+\frac{1}{2q_0}\left(-|\q|^2\v+(\q\cdot\v)\q\right)
\]
This gives the first three components of $\Psi_*X_E$. Comparing with the first three components of $X_{H_K}(\Psi(q,v))$ yields

\[
-\frac{|\q|^2}{q_0^2}\left(q_0\v+(\q\cdot\v)q_0^{-1}\q\right)=\frac{1}{2q_0}\left(-|\q|^2\v+(\q\cdot\v)\q\right)
\]
If $\v$ is non-zero one can compare the coefficients of $\v$. This comparison yields the impossibility $\displaystyle{1=\frac{1}{2}}$, and hence 
the identity \eqref{eq:symplecticmap} is not satisfied for $h=H_K$.
\end{proof}

\begin{remark}
The non-symplecticness of the map $\Psi$ can be seen alternatively as follows. 
It can be shown that $\Psi$ is the composition of the tangent lift with a scaling $\kappa : TS^3_+ \to TS^3_+$ of the fiber direction by the factor of $q_0^2 = 1-|\q|^2$. As shown below, $\kappa$ is not symplectic with respect to the standard symplectic structure, which implies that the map $\Psi$ is not symplectic either:
$$\kappa^* d\q\wedge d\v = d\q \wedge d((1-|\q|^2)\v) = (1-|\q|^2)d\q \wedge d\v - 2\sum_{i, j = 1}^3 v_iq_jdq_i\wedge dq_j \neq d\q\wedge d\v$$
Here, $\q$ is used as coordinates on $S^3_+$, via the projection along the $q_0$ direction.
\end{remark}

\subsection{Relation to the Ligon-Schaaf regularization for the Kepler problem in $\RR^3$}

Recall the description of the Ligon-Schaaf regularization for the Kepler problem on $\RR^3$ as given in \cite{CD,Cushman}. 
The symplectomorphism $\Phi_c$ intertwining the vector fields of the Kepler problem and one of the Delaunay vector fields is given by
$$ \Phi_c: (\Q, \V) \mapsto (x_c, y_c) = (\alpha_c \sin \varphi_c + \beta_c \cos \varphi_c, \nu_c(-\alpha_c\cos \varphi_c + \beta_c\sin\varphi_c))$$
where $\displaystyle{\nu_c = \frac{\gamma}{\sqrt{-2H_K}}}$, $\varphi_c = \alpha_{c, 0}$ and
$$\alpha_c = (\alpha_{c, 0}, \boldalpha_c) = \left(\frac{1}{\nu_c}\V\cdot \Q, \frac{\Q}{|\Q|} - \frac{\V\cdot \Q}{\gamma}\V\right)$$
$$\beta_c =  (\beta_{c, 0}, \boldbeta_c) = \left(\frac{|\Q|}{\gamma}(\V\cdot\V) - 1, \frac{|\Q|}{\nu_c} \V\right)$$
This calculation proves the following 
\begin{proposition}
$\Phi_c \circ \Psi = \Phi$ 
\end{proposition}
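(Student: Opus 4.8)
The statement to prove is $\Phi_c \circ \Psi = \Phi$, where all three maps have been written down explicitly. The plan is to verify this by direct computation, exploiting the fact that $\Psi$ sends $(q,v)$ to $(\Q, \V) = (\q/q_0, \boldpi)$, and that $\Phi_c$ has exactly the same algebraic shape as $\Phi$, differing only in that $\Phi_c$ is built from $(\Q, \V)$ via $\alpha_c, \beta_c, \varphi_c, \nu_c$ whereas $\Phi$ is built from $(q,v)$ via $\alpha, \beta, \varphi, \nu$. Since the outer assembly map $(\alpha, \beta, \varphi) \mapsto (\alpha \sin\varphi + \beta\cos\varphi, \nu(-\alpha\cos\varphi + \beta\sin\varphi))$ is identical in form for both, the whole claim reduces to checking that the pull-back of the four ingredients of $\Phi_c$ under $\Psi$ agrees with the corresponding ingredients of $\Phi$. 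That is, I would reduce the problem to the four scalar/vector identities
\[
\Psi^*\nu_c = \nu, \quad \Psi^*\varphi_c = \varphi, \quad \Psi^*\alpha_c = \alpha, \quad \Psi^*\beta_c = \beta.
\]

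The first identity is immediate from the Proposition just proved: since $\Psi^* H_K = E$, we get $\Psi^*\nu_c = \gamma/\sqrt{-2\,\Psi^*H_K} = \gamma/\sqrt{-2E} = \nu$. The remaining three identities follow by substituting $\Q = \q/q_0$ and $\V = \boldpi$ into the definitions of $\alpha_c, \beta_c, \varphi_c$ and simplifying, using the relations $|\Q| = |\q|/q_0$ (since $|\q|^2 + q_0^2 = 1$ on $S^3_+$, so $\sqrt{1-|\q|^2} = q_0$) and $\V \cdot \Q = \boldpi \cdot \q / q_0$. For instance, $\Psi^*\varphi_c = \Psi^*\alpha_{c,0} = \frac{1}{\nu_c}\V\cdot\Q$ pulls back to $\frac{1}{\nu q_0}\boldpi\cdot\q = \alpha_0 = \varphi$. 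Similarly, the spatial part of $\alpha_c$, namely $\frac{\Q}{|\Q|} - \frac{\V\cdot\Q}{\gamma}\V$, becomes $\frac{\q}{|\q|} - \frac{\boldpi\cdot\q}{\gamma q_0}\boldpi$, which is exactly $\boldalpha$; and the temporal and spatial parts of $\beta_c$ pull back to $\frac{|\q|}{\gamma q_0}\boldpi\cdot\boldpi - 1 = \beta_0$ and $\frac{|\q|}{\nu q_0}\boldpi = \boldbeta$ after clearing the denominators. Each of these is a one-line substitution once the dictionary $\Q = \q/q_0$, $|\Q| = |\q|/q_0$, $\V = \boldpi$, $\nu_c \circ \Psi = \nu$ is in hand.

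The only genuine subtlety — and hence the main thing to be careful about — is the matching of the $\nu$-versus-$\gamma$ normalization between the temporal and spatial components of $\alpha$ and $\beta$. In the definitions (\ref{eq:alpha}) and (\ref{eq:beta}) the zeroth component of $\alpha$ carries a factor $\frac{1}{\nu q_0}$ while its spatial part carries $\frac{1}{\gamma}$, and one must confirm that the same asymmetry is reproduced by pulling back $\alpha_c, \beta_c$, whose zeroth and spatial components likewise mix $\nu_c$ and $\gamma$. Once $\Psi^*\nu_c = \nu$ is established this bookkeeping is purely mechanical, but it is where a sign or a stray factor of $q_0$ could slip in, so I would track each component separately rather than treating $\alpha, \beta$ as single objects. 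Having verified all four ingredient identities, the equality $\Phi_c \circ \Psi = \Phi$ follows immediately because the two maps are the same function of matching ingredients; no further argument (and in particular no appeal to symplecticity, which indeed fails for $\Psi$) is needed.
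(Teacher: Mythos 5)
Your proof is correct and follows essentially the same route as the paper: the paper's ``proof'' consists precisely of recalling the formulas for $\alpha_c,\beta_c,\varphi_c,\nu_c$ and observing that under the substitution $(\Q,\V)=\Psi(q,v)=\left(\frac{\q}{q_0},\boldpi\right)$, together with $\Psi^*H_K=E$ (hence $\nu_c\circ\Psi=\nu$) and $|\Q|=|\q|/q_0$, these become exactly $\alpha,\beta,\varphi,\nu$, so that $\Phi_c\circ\Psi=\Phi$. Your componentwise verification, including taking $\varphi=\alpha_0$ to match $\varphi_c=\alpha_{c,0}$, is exactly this calculation made explicit.
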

A straightforward consequence of the proposition above is the following
\begin{corollary}
$\Phi$ is not symplectic. 
\begin{proof}
Suppose $\Phi$ is symplectic. Then we can rewrite the proposition above as $\Psi = \Phi_c^{-1}\circ\Phi$. Since  $\Phi_c^{-1}$ and $\Phi$ are symplectic
it follows that $\Psi$ is symplectic. This is in  contradiction with Proposition \ref{prop:notsymplectic}.
\end{proof}
\end{corollary}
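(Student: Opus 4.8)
The plan is to avoid any direct computation with $\Phi$ and instead to leverage the factorization $\Phi = \Phi_c \circ \Psi$ recorded in the preceding proposition, together with two facts already established: that $\Phi_c$ is a symplectomorphism (this is precisely the content of the Euclidean Ligon--Schaaf regularization of \cite{CD,Cushman}) and that $\Psi$ fails to be symplectic (Proposition \ref{prop:notsymplectic}). The argument is by contradiction and is essentially formal.

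First I would assume, for contradiction, that $\Phi$ is symplectic. Since $\Phi_c$ is a symplectomorphism, so is its inverse $\Phi_c^{-1}$, and since a composition of symplectomorphisms is again a symplectomorphism, the map $\Phi_c^{-1}\circ\Phi$ would then be symplectic. But the identity $\Phi = \Phi_c\circ\Psi$ rearranges to $\Psi = \Phi_c^{-1}\circ\Phi$, so $\Psi$ would inherit symplecticity. This directly contradicts Proposition \ref{prop:notsymplectic}, and the contradiction forces $\Phi$ to be non-symplectic.

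The only point requiring a little care --- and the one I would flag as the main (modest) obstacle --- is the bookkeeping of domains and invertibility. One must check that $\Phi_c$ is genuinely invertible on the relevant image so that $\Phi_c^{-1}$ is defined exactly where it is applied, and that the equality $\Phi = \Phi_c\circ\Psi$ holds as maps between the correct negative-energy phase spaces (the relevant pieces of $T^+S^3$, of $T_0\RR^3$, and of $T^+S^3$ again). Once the factorization and the symplectic (resp. non-symplectic) status of the two factors are in place, the conclusion is immediate, with no analytic or computational difficulty beyond what the earlier propositions already supply.

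As an alternative, and as a sanity check, one could instead argue directly in the style of Proposition \ref{prop:notsymplectic}, by exhibiting a single Hamiltonian $h$ for which $\Phi^* X_h \neq X_{h\circ\Phi}$ via the criterion \eqref{eq:symplecticmap}. However, the compositional argument is cleaner and reuses exactly the results already proved, so that is the route I would take.
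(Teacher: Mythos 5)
Your argument is correct and is essentially identical to the paper's own proof: both assume $\Phi$ is symplectic, rewrite the factorization $\Phi = \Phi_c\circ\Psi$ as $\Psi = \Phi_c^{-1}\circ\Phi$, and derive a contradiction with Proposition \ref{prop:notsymplectic} using the symplecticity of $\Phi_c$ from \cite{CD,Cushman}. Your additional remarks on domains and invertibility are reasonable care but add nothing beyond what the paper's proof implicitly uses.
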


\subsection{Relation to Moser's regularization for the Kepler problem in $\RR^3$}
First we recall Theorem $2$ in \cite{Milnor} for the Kepler problem in $\RR^3$. For a given constant value of $h = H_K$, 
define the space $M_h = \{\V| \V \cdot \V > 2h\}\union \{\infty\}$.
 Define also a Riemannian metric $\displaystyle{ds^2 = \frac{4d\V \cdot d\V}{(\V\cdot \V - 2h)^2}}$ on $M_h$. 
The arc-length parameter $\displaystyle{\int ds}$ of this metric along any velocity circle $\tau \to \V(\tau)$ is equal
to the parameter $\displaystyle{\int \frac{d\tau}{|\Q|}}$, where $\tau$ denotes the time. 

\begin{proposition}
The Moser's regularization for the Kepler problems in $\RR^3$ and the upper hemisphere are related by the gnomonic transformation.
\begin{proof}
The gnomonic transformation is given by $\displaystyle{(\Q, \V) = \Psi(q, v) = \left(\frac{\q}{q_0}, \boldpi\right)}$ and $\displaystyle{d\tau =\frac{dt}{q_0^2}}$ along the trajectory $q(t)$. It's clear that $\Psi$ maps $M_E^+$ to $M_E$ and the metrics correspond. The arc-length parameter becomes $\displaystyle{\int \frac{d\tau}{|\Q|} = \int\frac{\frac{dt}{q_0^2}}{\left|\frac{\q}{q_0}\right|} = \int \frac{dt}{q_0|\q|}}$.
\end{proof}
\end{proposition}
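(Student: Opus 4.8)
The plan is to exploit the fact that, on the hodograph (velocity) factor, the gnomonic transformation acts as the identity. By Theorem~\ref{th:gnomonic} the map $\Psi$ sends $(q,v)\mapsto(\Q,\V)=(\q/q_0,\boldpi)$, so $\V=\boldpi$ exactly. Consequently both Moser constructions --- the one built on the $\boldpi$-circles for the spherical problem and the one built on the $\V$-circles for the Euclidean problem --- live on one and the same family of three-vectors, and the whole statement reduces to checking that domains, metrics, and time parameters are carried into one another.

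First I would match the underlying spaces. The Euclidean model space is $M_{H_K}=\{\V\mid\V\cdot\V>2H_K\}\union\{\infty\}$, while the spherical one is $M_E^+=\{\boldpi\mid\boldpi\cdot\boldpi>2E\}\union\{\infty\}$. Using the already-proven intertwining $\Psi^*H_K=E$, the defining inequality $\V\cdot\V>2H_K$ pulls back under $\Psi$ precisely to $\boldpi\cdot\boldpi>2E$, so $\Psi$ carries $M_E^+$ bijectively onto $M_E$ and matches the two improper points. The neighbourhood of $\infty$ is handled by the inversion coordinate $\w=\boldpi/(\boldpi\cdot\boldpi)=\V/(\V\cdot\V)$, which is literally the same on both sides, so smoothness at the improper point needs no separate argument.

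Next I would compare the metrics. Both Moser metrics have the same form, $4\,d\V\cdot d\V/(\V\cdot\V-2H_K)^2$ on $\RR^3$ and $4\,d\boldpi\cdot d\boldpi/(\boldpi\cdot\boldpi-2E)^2$ on $S^3_+$; since $\V=\boldpi$ and $H_K=E$ on the relevant level, $\Psi$ pulls the first back onto the second. Finally I would check the reparametrisation of time: substituting $d\tau=dt/q_0^2$ and $\Q=\q/q_0$ into the Euclidean arc-length integrand gives $\int d\tau/|\Q|=\int(dt/q_0^2)/(|\q|/q_0)=\int dt/(q_0|\q|)$, which is exactly the parameter attached to the spherical Moser metric in the theorem of Section~\ref{sec:Moser}. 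This shows the two arc-length parameters agree and completes the identification.

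The point I expect to require the most care, and in fact the only real content, is the initial observation that $\Psi$ acts as the identity on $\boldpi=\V$; once this is isolated from Theorem~\ref{th:gnomonic} and combined with $\Psi^*H_K=E$, the matching of domains, of metrics, and of the time change are all immediate, and there is no genuine computational obstacle.
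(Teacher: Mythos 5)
Your proposal is correct and takes essentially the same approach as the paper's proof: the paper likewise uses $\Psi(q,v)=\left(\frac{\q}{q_0},\boldpi\right)$ with $d\tau=\frac{dt}{q_0^2}$, asserts that $M_E^+$ is carried to $M_E$ with corresponding metrics, and verifies the same arc-length identity $\int \frac{d\tau}{|\Q|}=\int \frac{dt}{q_0|\q|}$. The only difference is that you spell out, via $\V=\boldpi$ and the intertwining $\Psi^*H_K=E$, the domain and metric matching that the paper dismisses as ``clear.''
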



\section*{Appendix}

\begin{proof}[Proof of Lemma \ref{lem:muAbrackets}]
We verify only the third equation in (\ref{eq:poissonbrackets}). Using Lemma \ref{lemma:diracpoisson} we obtain
\[
\{\mu_i,q_j\}^*|_{TS^3}=\epsilon_{ijk}q_k, \quad \{\mu_i,v_j\}^*|_{TS^3}=\epsilon_{ijk}v_k,\quad \{\mu_i,\pi_j\}^*|_{TS^3} = \epsilon_{ijk}\pi_k
\]
and 

\[
\{\pi_i,\pi_j\}^*|_{TS^3}=q_iv_j-q_jv_i, \quad \{\mu_i,|\q|^2\}^*|_{TS^3}=0, \quad \left.\left\{\pi_i,\frac{1}{|\q|}\right\}^*\right|_{TS^3}=\frac{q_0q_i}{|\q|^3},\quad \{\pi_i, q_j\}^*|_{TS^3}=0
\]

Using the bilinearity property of Poisson brackets, expand 
\[
\{A_i,A_j\}=\left\{\epsilon_{lmi}\pi_l\mu_m-\gamma\frac{q_i}{|\q|},\epsilon_{pqj}\pi_p\mu_q-\gamma\frac{q_j}{|\q|}\right\}
\] 
to obtain 
\[
\quad \{A_i,A_j\}^*|_{TS^3}=-2( H-|\boldmu|^2)\epsilon_{ijk}\mu_k
\]
It helps to recall the identity
\[
\epsilon_{ijk}\epsilon_{ilm}=\delta_{km}\delta_{jl}-\delta_{jm}\delta_{kl}
\]
\end{proof}

\begin{proof}[Proof of Proposition \ref{prop:nu}.]
Let $\widetilde \E = \eta(|\boldmu|^2, H)\A$, and let $\widetilde \E=(\widetilde E_1, \widetilde E_2, \widetilde E_3)$, then the proposition amounts to showing that $\{\mu_1, \mu_2, \mu_3, \widetilde E_1, \widetilde E_2, \widetilde E_3\}$ satisfies \eqref{eq:so4brackets} with the Poisson bracket $\{,\}^*$.

We start with \eqref{eq:poissonbrackets} and
compute the brackets of the components of $\widetilde \E$. For $\eta = \eta(|\boldmu|^2, H)$,
$$\{\eta A_i,\eta A_j\}^*|_{TS^3}=\eta^2\{A_i,A_j\}^*|_{TS^3}+\eta\{A_i,\eta\}^*|_{TS^3} A_j+\eta\{\eta,A_j\}^*|_{TS^3} A_i$$

\[\begin{split}\{\eta, A_l\}^*|_{TS^3}&=\epsilon_{ijk}A_k\frac{\partial \eta}{\partial \mu_i}\delta_{jl}=\epsilon_{ilk}A_k\frac{\partial \eta}{\partial|\boldmu|^2}\frac{\partial |\boldmu|^2}{\partial \mu_i}=-2\epsilon_{ikl}\mu_iA_k\frac{\partial \eta}{\partial |\boldmu|^2}\\
   &=-2\frac{\partial \eta}{\partial |\boldmu|^2}(\boldmu\times \A)_l
\end{split}\]
Consequently 
\[
\{\eta A_i,\eta A_j\}^*|_{TS^3}=\eta^2\{A_i,A_j\}^*|_{TS^3}+2\eta\frac{\partial \eta}{\partial |\boldmu|^2}\left((\boldmu\times \A)_iA_j- (\boldmu\times \A)_jA_i\right).
\]
Simple computations show that
\[\begin{split}
\{\eta A_i,\eta A_j\}^*|_{TS^3}&=-2\eta^2(H-|\boldmu|^2)\epsilon_{ijk}\mu_k-\frac{\partial \eta^2}{\partial |\boldmu|^2}|\A|^2\epsilon_{ijk}\mu_k\\
                     &=\left[ -2\eta^2(H-|\boldmu|^2)-\frac{\partial \eta^2}{\partial |\boldmu|^2}(\gamma^2+2H|\boldmu|^2-(|\boldmu|^2)^2)\right]\epsilon_{ijk}\mu_k
\end{split}\]
Since $|\A|^2=\gamma^2+|\boldmu|^2(2H-|\boldmu|^2)$, we have that $\frac{\partial |\A|^2}{\partial |\boldmu|^2}=2(H-|\mu|^2)$. Therefore 
\begin{equation}
\label{eq:etildebracket}\{\widetilde E_i, \widetilde E_j\}^*|_{TS^3} = \{\eta A_i,\eta A_j\}^*|_{TS^3}=-\frac{\partial |\eta\A|^2}{\partial |\boldmu|^2}\epsilon_{ijk}\mu_k = -(\epsilon_{ijk}\mu_k)\frac{\partial}{\partial |\boldmu |^2} |\widetilde\e|^2
\end{equation}

Lastly, we determine the function $\eta$. The equation \eqref{eq:etildebracket} implies that $\eta$ has to satisfy
\[\frac{\partial |\eta\A|^2}{\partial |\boldmu|^2}=-1\]
Integrating we obtain $\eta^2|\A|^2=-|\boldmu|^2+C(H)$, and if $|\A|^2\neq 0$ we have 
\[\eta^2=\frac{-|\boldmu|^2+C(H)}{|\A|^2}\]
where $C(H)$ is an arbitrary function of $H$. For  $ {|\A|^2}=0$ we have $|\boldmu|^2=C(H)$. Substituting in
the expression $|\A|^2=\gamma^2+|\boldmu|^2(2H-|\boldmu|^2)$ yields 
\[
\gamma^2+C(H)(2H-C(H))=0
\]
then the positive solution gives $C(H) = H + \sqrt{\gamma^2 + H^2}$.
\end{proof}

\section*{Acknowledgements}
The research of S.H.  was supported by an  NSERC  Discovery Grant and  a Wilfrid Laurier start-up grant.  The research of M.S. was supported by  NSERC through a  Discovery Grant. 


\begin{thebibliography}{09}
 \bibitem{Abraham}
 R. Abraham, J.E. Marsden,  Foundations of mechanics. Second edition, revised and enlarged. With the assistance of Tudor Raţiu and Richard Cushman. Benjamin/Cummings Publishing Co., Inc., Advanced Book Program, Reading, Mass., 1978.
 \bibitem{Appell}
  P. Appell, “Sur les lois de forces centrales faisant d\'ecrire \'a leur point d’application une conique quelles que soient les
  conditions initiales,” Am. J. Math. {\bf 13}, 1891͒, 153-158.

\bibitem{Carinena2005}  J.F. Cari\~nena, M.F. Ra\~ñada, and M.  Santander, “Central potentials on spaces of constant curvature: The Kepler problem on the two-dimensional sphere $S^2$ and the hyperbolic plane $H^2$”, Journal of Mathematical Physics, {\bf 46},  2005, p. 052702.
\bibitem{CD} R.~ Cushman, J.J.~ Duistermaat, ``A characterization of the Ligon-Schaaf regularization map'', Comm. Pure and Applied Math., {\bf 50} ,1997, 773-787.
\bibitem{Cushman} R.~ Cushman and L.~ Bates, Global Aspects of Classical integrable Systems, Birkh\"{a}user, Basel, 1997. 
\bibitem{Santoprete2012_1}
F. Diacu, E. Perez-Chavela, M. Santoprete  ``The N-body Problem in Spaces of Constant Curvature. Part I: Relative Equilibria'',
 Joural of Nonlinear Science, {\bf 22}, 2012, 247-266.
\bibitem{Santoprete2012_2}
F. Diacu, E. Perez-Chavela, M. Santoprete  ``The N-body Problem in Spaces of Constant Curvature. Part II: Singularities'', 
 Joural of Nonlinear Science, {\bf 22}, 2012, 267-275.
\bibitem{Diacu_2012}
F.Diacu, Relative equilibria in the curved n-body problem, Atlantis Monographs in Dynamical Systems, Atlantis Press, 2012.
\bibitem{Easton}
 R. Easton, ``Regularization of vector fields by surgery``. J. Differential Equations, {\bf 10}, 1971, 92--99.
\bibitem{Goldstein1976} 
 H. Goldstein, ``More on the prehistory of the Laplace or Runge-Lenz vector'', American Journal of Physics, {\bf 44}, 1976, 1123--1124.
\bibitem{Heckman}
G. Heckman, T. de Laat,''On the Regularization of the Kepler Problem``, preprint, arXiv:1007.3695.
\bibitem{Higgs}
 P.W. Higgs, ``Dynamical symmetries in a spherical geometry I'', J. Phys. A, {\bf 309}, 1979.
 \bibitem{Levi-Civita}
 T. Levi-Civita,``Sur la r\'egularisation du probl\`eme des trois corps'', Acta Math., {\bf 42}, 1920, 99--144.
\bibitem{Ligon}
T. Ligon, and M. Schaaf, ``On the global symmetry of the classical Kepler problem'', Rep.
Mathematical Phys., {\bf  9}, 1976, 281--300.
\bibitem{Marle}
C.-M. Marle,``A property of conformally Hamiltonian vector fields; application to the Kepler problem'', preprint, arXiv:1011.5731.

\bibitem{Milnor}
J. Milnor, ``On the Geometry of the Kepler Problem'', The American Mathematical Monthly, {\bf 90}, 1983, 353-365.
\bibitem{Moser}
J. Moser, ``Regularization of Kepler's problem and the averaging method on a manifold'',
Comm. Pure Appl. Math., {\bf 23}, 1970, 609--636.

\bibitem{Santander}
L. Garcia-Gutierrez, M. Santander, ``Levi-Civita regularization and geodesic flows for the `curved' Kepler problem'', preprint, arXiv:0707.3810v2.

\bibitem{Santoprete2008}
M. Santoprete, ``Gravitational and harmonic oscillator potentials on surfaces of revolution'', Journal of Mathematical Physics, {\bf 49}, 2008, p. 042903.
\bibitem{Santoprete2009} M. Santoprete, ``Block regularization of the Kepler problem on surfaces of revolution with positive constant curvature'', J. Differential Equations, {\bf 247}, 2009, 1043--1063.
\bibitem{Serret}
  P. Serret, Th\'eorie Nouvelle G\'eom\'etrique et M\'ecanique des Lignes a Double Courbure Librave de Mallet-Bachelier,
  Paris, 1860͒.

\end{thebibliography}
\end{document}